\def\co{\colon\thinspace}
\newcommand{\LO}{\operatorname{LO}}
\newcommand{\sT}{\mathcal{T}}
\newcommand{\bZ}{\mathbb{Z}}
\newcommand{\rk}{\operatorname{rk}}
\newtheorem{theorem}{Theorem}
\newtheorem{definition}[theorem]{Definition}
\newtheorem{corollary}[theorem]{Corollary}
\newtheorem{proposition}[theorem]{Proposition}
\newtheorem{lemma}[theorem]{Lemma}
\title[Graph manifolds, left-orderability and amalgamation]{Graph manifolds, left-orderability and amalgamation}
\date{June 2, 2011}
\author[Adam Clay]{Adam Clay}
\address{CIRGET,  Universit\'e du Qu\'ebec \`a Montr\'eal, Case postale 8888, Succursale centre-ville, Montr\'eal QC, H3C 3P8.}
\email{aclay@cirget.ca}
\author[Tye Lidman]{Tye Lidman}
\address{Department of Mathematics, UCLA, 520 Portola Plaza, Los Angeles CA, 90095.}
\email{tlid@math.ucla.edu}
\author[Liam Watson]{Liam Watson}
\address{Department of Mathematics, UCLA, 520 Portola Plaza, Los Angeles CA, 90095.}
\email{lwatson@math.ucla.edu}
\thanks{First and third authors partially supported by NSERC postdoctoral fellowships}
\begin{document}

\begin{abstract}
We show that every irreducible toroidal integer homology sphere graph manifold has a left-orderable fundamental group. This is established by way of a specialization of a result due to Bludov and Glass \cite{BG2009} for the almagamated products that arise, and in this setting work of Boyer, Rolfsen and Wiest \cite{BRW2005} may be applied. Our result then depends on input from 3-manifold topology and Heegaard Floer homology. 
\end{abstract}

\maketitle

\section{Introduction}

A group $G$ is said to be \textit{left-orderable} if there exists a strict total ordering $<$ of $G$ such that $g<h$ implies $fg<fh$ for all $f, g, h$ in $G$.  This is equivalent to the existence of a \textit{positive cone} $P \subset G$, which is a subset of elements of $G$ satisfying $P \cdot P \subset P$, and $P \sqcup \{ 1 \} \sqcup P^{-1} = G$.    To see that these notions are equivalent, if one is given a left ordering of a group $G$, observe that
\[ P := \{ g \in G | g>1 \}
\]
is a positive cone; conversely if $P \subset G$ is a positive cone then
\[ g <h  \Longleftrightarrow g^{-1}h \in P
\]
defines a left ordering of $G$. We adopt the convention that $P\ne\emptyset$, so that the trivial group is not left-orderable.

Left-orderability is not preserved under many classical group operations.  For example, if $G$ is a left-orderable group it is easy to see that a quotient of $G$ may or may not be left-orderable.  In fact it is a relatively simple exercise to prove that a quotient $G/N$ is left-orderable if and only if $N$ is \textit{relatively convex} in some left ordering of $G$.  On the other hand, the question of left-orderability of other classical constructions is not so simple.  Both HNN-extensions and free products with amalgamation present considerable difficulty when attempting to determine necessary and sufficient conditions for them to be left-orderable, and related problems remained open for many years (see for example \cite[Problem 15.34]{Kourovka02}).  However with recent work, left-orderability of free products with amalgamation and graphs of groups is finally well understood \cite{BG08, BG2009, Chiswell2010}.

At the same time, there has been increasing interest in left-orderable groups from a topological perspective, with particular interest in the left-orderability of the fundamental groups of 3-manifolds.  Many tools in 3-manifold theory involve decomposing the manifold into simpler pieces along incompressible surfaces, resulting in a fundamental group that can be presented as a free product with amalgamation or a graph of groups.   As such, these new results from the field of orderable groups should facilitate left-ordering of the fundamental groups of many 3-manifolds that contain incompressible surfaces.   In particular, if a 3-manifold $M$ admits a JSJ decomposition (see Section \ref{sec:topology}, in particular Theorem \ref{thm:JSJ}) into components that are well-understood from an orderability standpoint, then it should be possible to determine whether or not $\pi_1(M)$ is left-orderable.  Our main theorem is the product of this approach.

\begin{theorem}\label{thm:main}  Let $Y$ be an irreducible, toroidal graph manifold.  If $Y$ is an integer homology 3-sphere, then $\pi_1(Y)$ is left-orderable.
\end{theorem}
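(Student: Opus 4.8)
The plan is to use the JSJ (graph manifold) structure to express $\pi_1(Y)$ as the fundamental group of a graph of groups, where the vertex groups come from the Seifert fibered pieces of the decomposition and the edge groups come from the decomposing tori. Since $Y$ is a graph manifold, each piece $M_v$ in the JSJ decomposition is Seifert fibered over a surface with boundary, and the gluing tori correspond to $\bZ \oplus \bZ$ edge groups. My first step would be to set up this graph-of-groups decomposition carefully, recording how the fibre classes and boundary slopes of adjacent Seifert pieces interact under the gluing maps; the integer homology sphere hypothesis should be exploited here to control these gluings (in particular ruling out degenerate configurations and ensuring the relevant slopes are never aligned).

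\emph{Second}, I would left-order each vertex group. Each Seifert fibered piece over a surface with nonempty boundary has fundamental group that surjects onto the orbifold surface group with an infinite cyclic (fibre) kernel; these groups are well understood and are known to be left-orderable by the work of Boyer, Rolfsen and Wiest \cite{BRW2005}. The key structural fact I want from their work is not merely that each piece is left-orderable, but that one can choose left orderings that behave well on the peripheral tori---i.e.\ the restriction of the ordering to each boundary $\bZ\oplus\bZ$ subgroup can be controlled, and in particular made to respect (or avoid) the fibre slope as needed. This compatibility on the edge groups is exactly the input required to amalgamate.

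\emph{Third}, I would invoke the amalgamation machinery. The general results of Bludov and Glass \cite{BG2009} give conditions under which a graph of left-orderable groups admits a left-ordering of its fundamental group, the obstruction being whether the orderings on the vertex groups can be chosen so as to agree on the edge groups in a globally consistent way. The paper signals that it uses a \emph{specialization} of Bludov--Glass tailored to the amalgamated products arising from graph manifolds, so I would isolate the precise compatibility condition their criterion demands and verify it for the orderings produced in the previous step. Here is where the homology sphere hypothesis re-enters: it forces the gluing data to satisfy the arithmetic needed for a consistent choice of orderings, presumably by guaranteeing that the induced actions on the edge tori can be simultaneously ordered.

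The step I expect to be the main obstacle is the third one: matching the left orderings across the edge groups. Left-orderability is notoriously not preserved under amalgamation, and the entire difficulty is in showing that the local orderings on the Seifert pieces can be chosen coherently so that they patch along the decomposing tori. Verifying the hypotheses of the Bludov--Glass criterion---rather than just citing left-orderability of the pieces---will be the technical heart, and the role of the integer homology sphere condition in making this patching possible is where I would expect the essential topological input (via the structure of the JSJ graph and Heegaard Floer homology constraints on the gluings) to be decisive.
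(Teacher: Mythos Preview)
Your outline misses the paper's central maneuver and, as written, has a genuine gap at step three. You propose to verify the Bludov--Glass compatibility condition directly on the $\bZ\oplus\bZ$ edge groups by controlling how left orderings on each Seifert piece restrict to the peripheral tori. The paper deliberately avoids this. Instead, it introduces the notion of a \emph{left-orderable slope} (Definition~\ref{def:LOslope}) and proves Theorem~\ref{thm:JSJLO}: if there is a slope $\alpha$ on $\partial M_1$ such that both $M_1(\alpha)$ and $M_2(\phi_*(\alpha))$ have left-orderable fundamental group, then one obtains a surjection from $\pi_1(Y)$ onto an amalgam over a \emph{cyclic} subgroup, which is automatically left-orderable by the easy Corollary~\ref{thm:cyclicprod} of Bludov--Glass; Boyer--Rolfsen--Wiest then finishes. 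So the compatibility problem over $\bZ\oplus\bZ$ is bypassed entirely and replaced by a Dehn-filling problem: find a single matching pair of left-orderable slopes. Your plan gives no mechanism for producing normal families of orderings on each Seifert piece whose peripheral restrictions match, and the paper's final section (the $+4$-surgery on the figure eight) is there precisely to illustrate that doing this directly is delicate even in a single example and can fail for Theorem~\ref{thm:JSJLO} itself.

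You also underspecify the Heegaard Floer input. In the paper it is not a vague constraint on gluings; it is the engine that produces the matching left-orderable slopes in the only hard case of the induction, namely when both $Y_i$ obtained by cutting along a JSJ torus are $S^3$ or $\Sigma(2,3,5)$. One combines the equivalence ``Seifert fibered L-space $\Longleftrightarrow$ $\pi_1$ not left-orderable'' (Theorem~\ref{thm:BGWlspace}) with Wu's cosmetic surgery restrictions and the rational surgery formula (Lemmas~\ref{lem:smallsurgeryLspace} and~\ref{lem:largesurgeryLspace}) to guarantee infinitely many left-orderable integer slopes on one side and all but finitely many left-orderable $\tfrac{1}{n}$ slopes on the other; since a splicing sends $\tfrac{n}{1}$ to $\tfrac{1}{n}$, a matching pair exists. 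The integer homology sphere hypothesis is used to make $\Gamma_Y$ a tree (so each JSJ torus separates), to recognize the decomposition as a splicing of knots (Lemma~\ref{lem:toroidal-ZHS}), and to enable the induction on $|\sT|$---not to force ``arithmetic compatibility'' of orderings in the way you suggest.
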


\subsection*{A remark on conventions} We include Seifert fibred spaces as graph manifolds; these arise when the JSJ decomposition is trivial. As such, not every toroidal graph manifold admits a non-trivial JSJ decomposition (though the converse always holds).  We also include the connect sum of graph manifolds as a graph manifold. Indeed, Theorem \ref{thm:main} is a special case of Theorem \ref{thm:general-main} (proved in Section \ref{sec:topology}) treating the reducible case.

\subsection*{Organization} The plan of this paper is as follows.  In Section \ref{sec:grouptheory}, we review the necessary definitions and recent results pertaining to left-orderability of amalgamated free products.  We also prove our main topological application of these results, which connects the JSJ decomposition of a 3-manifold with the manifolds obtained by Dehn filling the JSJ-pieces along their torus boundaries.  Section \ref{sec:topology} is devoted to  the proof  of Theorem \ref{thm:main}, and covers the necessary elements from Heegaard Floer homology. As a quick consequence, we answer a question of D\c{a}bkowski, Przytycki and  Togha \cite[Problem 1(iii)]{DPT2005}. This, together with other examples and generalizations, is provided in Section \ref{sec:examples}.

\subsection*{Acknowledgements} We thank Steve Boyer for his interest in this project, and in particular for posing the question `does the result of $\pm 4$-surgery on the figure eight have a left-orderable fundamental group?' (see \cite{BGW2010}) to which the answer is `yes' (see Section \ref{sec:examples}). We also thank Stefan Friedl for comments on an early draft of this paper. 

\section{Requisite group theory and relevant background}

\label{sec:grouptheory}

We begin with the definitions that are required to state a result of Bludov and Glass \cite{BG2009}.   If a group $G$ is left-orderable, we denote the set of all positive cones in $G$ by $\LO(G)$, we think of $\LO(G)$ as the set of all left-orderings of $G$.  The set $\LO(G)$ comes equipped with a natural $G$-action by conjugation, defined by $g(P) = gPg^{-1}$ for all $g \in G$ and $P \in \LO(G)$.  In terms of left-orderings, this action can be described as follows: given a left-ordering $<$ of $G$, an element $g$ sends $<$ to the ordering $<^g$, which is defined according to the rule
\[ h<^g f  \Longleftrightarrow hg <fg.
\]

\begin{definition} A set $L \subset \LO(G)$ of left-orderings is \textit{normal} if it is $G$-invariant.
\end{definition}

\begin{definition}  Suppose that $G_i$ is a left-orderable group, $H_i$ is a subgroup of $G_i$ and let $L_i \subset \LO(G_i)$ denote a set of left-orderings of $G_i$ for $i=1,2$.  Suppose that $\phi:H_1 \rightarrow H_2$ is an isomorphism.  Then $\phi$ is \textit{compatible for $(L_1, L_2)$} if
\[ (\forall P_1 \in L_1)(\exists P_2 \in L_2) \mbox{ such that } (\forall h_1 \in H_1)( h_1 \in P_1 \Rightarrow \phi(h_1) \in P_2).
\]
\end{definition}

One of the main results of \cite{BG2009} is the following.

\begin{theorem}[Bludov-Glass {\cite[Theorem A]{BG2009}}]
\label{thm:amalgam}
Suppose that $G_i$ are left-orderable groups, and $H_i$ is a subgroup of $G_i$ for $i=1,2$.  Let $\phi: H_1 \rightarrow H_2$ be an isomorphism.  The free product with amalgamation $G_1 *G_2 (H_1 \stackrel{\phi}{\cong} H_2)$ is left-orderable if and only if there exist normal families $L_i \in \LO(G_i)$ $(i=1,2)$ such that $\phi$ is compatible for $(L_1, L_2)$.
\end{theorem}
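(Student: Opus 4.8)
The plan is to prove the two implications separately, with the forward direction (necessity) being routine and the converse (sufficiency) carrying the real content. Throughout write $G := G_1 *_{\phi} G_2$ and identify each $G_i$ with its image in $G$, so that $H_1$ and $H_2$ become a common subgroup $H \le G$ with $\phi(h) = h$ for every $h \in H$. For necessity, suppose $G$ admits a positive cone $P$. The first observation is that for any subgroup $K \le G$ the intersection $P \cap K$ is a positive cone of $K$: it is closed under multiplication, and the partition $P \sqcup \{1\} \sqcup P^{-1} = G$ restricts to a partition of $K$. I would therefore set
\[
L_i := \{\, gPg^{-1} \cap G_i : g \in G \,\} \subset \LO(G_i), \qquad i = 1,2.
\]
Each $L_i$ is $G_i$-invariant, since for $g_i \in G_i$ we have $g_i(gPg^{-1} \cap G_i)g_i^{-1} = (g_ig)P(g_ig)^{-1} \cap G_i$, again a member of $L_i$; thus $L_1, L_2$ are normal families. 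For compatibility, given $P_1 = gPg^{-1} \cap G_1 \in L_1$ I would take $P_2 := gPg^{-1} \cap G_2 \in L_2$ with the \emph{same} conjugating element $g$. If $h \in H$ lies in $P_1$, then $h \in gPg^{-1}$, and since $\phi(h) = h \in G_2$ we get $\phi(h) \in gPg^{-1} \cap G_2 = P_2$. Hence $\phi$ is compatible for $(L_1, L_2)$.

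For sufficiency I would build a positive cone on $G$ out of the data $(L_1, L_2, \phi)$ using the normal form of the amalgamated product. Fix left transversals of $H$ in $G_1$ and in $G_2$, so that each $g \ne 1$ has a unique reduced expression $g = s_1 s_2 \cdots s_n h$ with $h \in H$ and the syllables $s_j$ alternating between the two transversals; call $n$ the syllable length. The idea is to decide the sign of $g$ by induction on $n$: one orders the elements of $H$ (the case $n=0$) by the positive cone that compatibility pins down on $H$, and decides the sign of a longer word by the ordering of its leading syllable inside the appropriate factor. The essential structural point is that, after left-multiplying and then stripping a syllable, one is comparing shorter words relative to a \emph{conjugated} factor ordering, and it is precisely the $G_i$-invariance of $L_i$ that keeps these conjugated orderings inside the prescribed family, while compatibility of $\phi$ guarantees that the orderings induced on the shared subgroup $H$ from the two sides agree wherever the recursion crosses the amalgamation.

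The main obstacle, and the technical heart of the argument, is verifying that this recursively specified relation is genuinely a left-invariant total order, not merely a well-defined sign on each element. Closure of the resulting set $P$ under multiplication and the trichotomy $P \sqcup \{1\} \sqcup P^{-1} = G$ demand careful bookkeeping of how the chosen representatives $P_i \in L_i$ must be updated — by conjugation, hence within the normal families — as syllables are absorbed or cancelled during multiplication; transitivity is where compatibility must be invoked repeatedly to ensure that the updates performed on the two sides never conflict on $H$. I expect that organizing this induction (equivalently, phrasing it as the assembly of a single order-preserving $G$-action from the $G_i$-actions on $L_1$ and $L_2$ glued along their common restriction to $H$) is the step that consumes the bulk of the proof.
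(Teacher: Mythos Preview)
The paper does not contain a proof of this theorem: it is quoted as \cite[Theorem A]{BG2009} and used as a black box, so there is no ``paper's own proof'' to compare your proposal against.

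On the substance of your attempt: the necessity direction is fine and is indeed the routine half. Your sufficiency argument, however, is only a plan, not a proof, and you say as much in your final paragraph. The sketch you give --- decide the sign of a reduced word by the sign of its leading syllable in the appropriate factor ordering, conjugating as you strip syllables --- is the right intuition, but as written it hides the genuine difficulty. In particular, compatibility as defined here does \emph{not} ``pin down'' a single ordering on $H$: it only says that for each $P_1 \in L_1$ there is \emph{some} $P_2 \in L_2$ agreeing with it on $H$, and different choices of $P_1$ yield different restrictions. The recursive procedure you describe therefore requires, at every syllable boundary, a coherent choice of which member of the other family to switch to, and proving that such choices can be made so that the resulting sign function is closed under products is precisely the content of the Bludov--Glass paper. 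That argument is substantial and does not reduce to a routine induction on syllable length; your proposal correctly locates where the work lies but does not carry it out.
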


In general, using this theorem to left-order free products with amalgamation seems to be quite difficult.  Unless the groups in question have a very well-understood structure, verifying the existence of normal, compatible families of left-orderings can be an intractable problem (It is possible in some cases, see  Section \ref{sec:examples}).

However, there are several immediate corollaries of this theorem that are very useful when attempting to left-order amalgamated products, as the compatibility and normality conditions become easier to verify.  The following will be most useful for our purposes.

\begin{corollary} [Bludov-Glass {\cite[Corollary 5.3]{BG2009}}]
\label{thm:cyclicprod}
 Suppose that $G_i$ are left-orderable groups with rank one abelian subgroups $H_i$, $i=1,2$.  Let $\phi: H_1 \rightarrow H_2$ be an isomorphism.   Then $G_1 *G_2 (H_1 \stackrel{\phi}{\cong} H_2)$ is left-orderable.
\end{corollary}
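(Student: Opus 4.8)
The goal is to prove Corollary \ref{thm:cyclicprod}: if $G_1, G_2$ are left-orderable with rank one abelian subgroups $H_1, H_2$, and $\phi\co H_1 \to H_2$ is an isomorphism, then the amalgamated product $G_1 *G_2 (H_1 \stackrel{\phi}{\cong} H_2)$ is left-orderable.

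By Theorem \ref{thm:amalgam}, it suffices to produce normal families $L_i \subset \LO(G_i)$ such that $\phi$ is compatible for $(L_1, L_2)$. The plan is to take $L_i = \LO(G_i)$, the set of \emph{all} left-orderings of $G_i$; this is trivially $G_i$-invariant, hence normal, so the only content is verifying compatibility. So I need: for every $P_1 \in \LO(G_1)$ there exists $P_2 \in \LO(G_2)$ with $h_1 \in P_1 \Rightarrow \phi(h_1) \in P_2$ for all $h_1 \in H_1$.

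The key observation is that a rank one abelian group $H$ admits exactly two left-orderings (equivalently two positive cones, $P$ and $P^{-1}$), since any such group embeds in $(\bR, +)$ and its orderings are determined by a choice of orientation. Given $P_1 \in \LO(G_1)$, its restriction to $H_1$ is one of these two orderings of $H_1$, and transporting it across $\phi$ gives one of the two orderings of $H_2$. So it suffices to show that $G_2$ admits left-orderings restricting to \emph{both} orientations of $H_2$; then for each $P_1$ I can pick the matching $P_2$. But this is automatic: if $P_2 \in \LO(G_2)$ restricts to one orientation on $H_2$, then $P_2^{-1}$ (the reverse ordering, also a valid left-ordering since left-orderability is preserved under inversion of the cone) restricts to the opposite orientation. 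Thus both orientations of $H_2$ are realized by restrictions of genuine left-orderings of $G_2$, and compatibility follows.

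I would therefore carry out the argument in three steps: first, recall that a nontrivial rank one abelian group has precisely two positive cones, so its left-orderings come in a single inverse pair; second, note $G_2$ is left-orderable and that the two orderings $P_2, P_2^{-1}$ restrict to the two distinct orientations of $H_2$; third, given any $P_1 \in L_1 = \LO(G_1)$, match it with whichever of $P_2, P_2^{-1}$ induces on $H_2$ the $\phi$-image of the orientation $P_1$ induces on $H_1$, and conclude compatibility, then invoke Theorem \ref{thm:amalgam}. The only subtlety worth stating carefully is the structural fact that a rank one abelian group has exactly two orderings --- everything else is bookkeeping --- so I expect no real obstacle, merely the need to phrase the orientation-matching cleanly.
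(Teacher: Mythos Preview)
The paper does not supply a proof of this corollary; it is quoted directly from Bludov--Glass with a citation and no further argument. Your derivation from Theorem~\ref{thm:amalgam} is correct and is precisely the standard one: take $L_i = \LO(G_i)$ (automatically normal), use that a torsion-free rank one abelian group has exactly two positive cones $P$ and $P^{-1}$, and observe that if $P_2 \in \LO(G_2)$ restricts to one orientation on $H_2$ then $P_2^{-1}$ restricts to the other, so every restriction of a $P_1$ to $H_1$ can be matched via $\phi$. Nothing is missing; the only point worth making explicit when you write it up is that the subgroups $H_i$ are torsion-free because the ambient $G_i$ are left-orderable, which is what guarantees the embedding into $\bQ$ and hence the dichotomy of orderings.
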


Our main group-theoretic result will be an application of Theorem \ref{thm:amalgam} to the decomposition of a 3-manifold $M$ along an incompressible torus into components $M_1$ and $M_2$ having toroidal boundaries.  The key observation is that in this special case, applying Theorem \ref{thm:amalgam} can be reduced to an application of Corollary \ref{thm:cyclicprod} by using Dehn filling and the following theorem from 3-manifold topology:

\begin{theorem} [Boyer-Rolfsen-Wiest {\cite[Theorem 1.1]{BRW2005}}]
\label{thm:loimage}
 Suppose that $M$ is a compact, connected, $\mathbb{P}^2$-irreducible 3-manifold.  Then $\pi_1(M)$ is left-orderable if and only if there exists a nontrivial homomorphism from $\pi_1(M)$ to a left-orderable group.
\end{theorem}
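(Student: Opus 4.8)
The forward implication is immediate: if $\pi_1(M)$ is left-orderable then, since the trivial group is excluded by our conventions, $\pi_1(M)$ is nontrivial, and the identity map $\pi_1(M)\to\pi_1(M)$ is a nontrivial homomorphism to a left-orderable group. The content is therefore the converse, and the plan is to manufacture a left-ordering of all of $\pi_1(M)$ out of a single nontrivial homomorphism $\phi\co\pi_1(M)\to L$ with $L$ left-orderable. First I would replace $L$ by the image $H=\phi(\pi_1(M))$: a subgroup of a left-orderable group is left-orderable, so $H$ is left-orderable, and being a nontrivial subgroup of a torsion-free group it contains an element of infinite order and hence is infinite. Thus the problem reduces to the case where $\pi_1(M)$ surjects onto a nontrivial infinite left-orderable group $H$.

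Next I would pass to dynamics. A countable left-orderable group is precisely one that embeds in $\mathrm{Homeo}^+(\bR)$, so $H$ acts faithfully by orientation-preserving homeomorphisms on $\bR$; composing with $\phi$ yields a nontrivial action $\rho\co\pi_1(M)\to\mathrm{Homeo}^+(\bR)$ whose kernel is exactly $\ker\phi$. The main obstacle is precisely this kernel. Left-orderability of $\pi_1(M)$ is equivalent to a \emph{faithful} orientation-preserving action on $\bR$, whereas the action $\rho$ we have built is in general only nontrivial. The heart of the matter is to promote a nontrivial action to left-orderability of the entire group, and this is where the hypothesis that $M$ is $\mathbb{P}^2$-irreducible must be used; no purely group-theoretic argument can succeed, since a general group may admit a nontrivial homomorphism to a left-orderable group without itself being left-orderable.

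To bring the topology to bear I would split according to the first Betti number. If $b_1(M)>0$, then $\pi_1(M)$ surjects onto $\bZ$, producing a short exact sequence $1\to K\to\pi_1(M)\to\bZ\to 1$ in which $K$ is the fundamental group of an infinite cyclic cover of $M$; here one argues $K$ is left-orderable from the covering-space structure, and then uses that left-orderability is closed under extensions (if $N\trianglelefteq G$ with both $N$ and $G/N$ left-orderable, then $G$ is left-orderable) to conclude, so that in this branch $\phi$ plays no role at all. The delicate case is $b_1(M)=0$, where $\phi$ is genuinely needed: here I would feed the nontrivial action $\rho$ on $\bR$, together with the asphericity of the $\mathbb{P}^2$-irreducible manifold $M$, into a dynamical-realization argument to upgrade $\rho$ to a faithful orientation-preserving action, thereby ordering $\pi_1(M)$. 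I expect this final step---eliminating the kernel in the rational homology sphere case---to be the principal difficulty, since it is exactly the point at which qualitative information carried by a single homomorphism must be converted into a global order on all of $\pi_1(M)$.
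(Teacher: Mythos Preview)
This theorem is quoted from \cite{BRW2005} without proof in the present paper, so there is no in-paper argument to compare against; the question is whether your sketch stands on its own. Your split on $b_1(M)$ and the observation that $\phi$ is only needed when $b_1(M)=0$ both match the original argument, but the mechanism you propose for the rational homology sphere case does not work. Dynamical realisation converts a left-ordering into a faithful action on $\bR$, not the reverse; there is no procedure that takes a non-faithful action and, using asphericity of $M$ alone, eliminates the kernel. The phrase ``upgrade $\rho$ to a faithful action'' is hiding the whole difficulty rather than resolving it.

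What Boyer--Rolfsen--Wiest actually use is the Burns--Hale criterion: a group $G$ is left-orderable if and only if every nontrivial finitely generated subgroup admits a nontrivial homomorphism to a left-orderable group. For $H\le\pi_1(M)$ finitely generated and nontrivial, pass to the cover $M_H$ and apply Scott's compact core theorem to write $H=\pi_1(C)$ with $C$ compact and $\mathbb{P}^2$-irreducible. If $H$ has infinite index then $M_H$ is noncompact, so $\partial C\ne\emptyset$, and (since $C$ is $\mathbb{P}^2$-irreducible with nontrivial $\pi_1$) a standard homology argument gives a surjection $H\to\bZ$. If $H$ has finite index then $\phi|_H$ is nontrivial, since otherwise $\phi(\pi_1(M))$ would be a finite and hence trivial subgroup of the torsion-free group $L$. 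This same compact-core step is what your phrase ``$K$ is left-orderable from the covering-space structure'' in the $b_1>0$ case is gesturing toward without naming. The keywords you are missing are \emph{Burns--Hale}, \emph{local indicability}, and \emph{compact core}; without them the sketch does not close.
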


To state and prove our main group-theoretic result, we recall the notion of Dehn filling.  Suppose that $M$ is a 3-manifold with incompressible torus boundary.  A slope $\alpha$ is a primitive element in the projective homology $H_1(\partial M;\bZ) / \{ \pm 1\}$ of the boundary.   The result of Dehn filling $M$ along the slope $\alpha$ is the 3-manifold $M(\alpha)$ obtained by identifying the boundary of a solid torus $D^2 \times S^1$ to $\partial M$ in such a way that $\partial D^2 \times \{*\}$ is glued to $\alpha$. The following definition is natural in this setting:

\begin{definition}\label{def:LOslope}
Given a 3-manifold $M$ with torus boundary, a slope $\alpha$ will be called left-orderable if $\pi_1(M(\alpha))\cong \pi_1(M) / \langle \langle \alpha \rangle \rangle$ is a left-orderable group.
\end{definition}

It is our convention that the trivial group is not left-orderable, so any left-orderable slope $\alpha$ generates a proper normal subgroup $\langle\langle\alpha\rangle\rangle$ of $\pi_1(M)$ and $\pi_1(M(\alpha))$ is non-trivial.

\begin{theorem}
\label{thm:JSJLO}
Suppose that $M_1$ and $M_2$ are $3$-manifolds with incompressible torus boundaries, and $\phi : \partial M_1 \rightarrow \partial M_2$ is a homeomorphism such that $Y = M_1 \cup_{\phi} M_2$ is irreducible. If there exists a left-orderable slope $\alpha$ such that $\phi_*(\alpha)$ is also a left-orderable slope, then $\pi_1 (Y)$ is left-orderable (here, $\phi_*$ is the induced homomorphism on fundamental groups).
\end{theorem}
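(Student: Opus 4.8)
The plan is to invoke the Boyer--Rolfsen--Wiest criterion (Theorem \ref{thm:loimage}): since $Y$ is irreducible, hence $\mathbb{P}^2$-irreducible, it suffices to exhibit a single nontrivial homomorphism from $\pi_1(Y)$ to a left-orderable group, and I will assemble that target out of the two Dehn-filled pieces using Corollary \ref{thm:cyclicprod}. First I would record the group theory. Writing $T$ for the gluing torus and using that $\partial M_1,\partial M_2$ are incompressible, van Kampen's theorem presents $\pi_1(Y)$ as the amalgamated free product
\[
\pi_1(Y) \cong \pi_1(M_1) *_{\pi_1(T)} \pi_1(M_2),
\]
over $\pi_1(T)\cong\bZ^2$, in which $\al$ is the single class identified with the slope $\al$ inside $\pi_1(M_1)$ and with $\phi_*(\al)$ inside $\pi_1(M_2)$.

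Next I would extend $\al$ to a basis $\{\al,\be\}$ of $\pi_1(T)$ and consider the filling quotients $q_i\co\pi_1(M_i)\to G_i$, where $G_1=\pi_1(M_1(\al))$ and $G_2=\pi_1(M_2(\phi_*(\al)))$ are left-orderable by hypothesis. Each $q_i$ kills $\al$, so the peripheral image $H_i:=q_i(\pi_1(T))=\langle q_i(\be)\rangle$ is cyclic; since $G_i$ is left-orderable it is torsion-free, so each $H_i$ is either trivial or infinite cyclic, and the argument splits on this. In the main case $H_1\cong H_2\cong\bZ$, and I would set $L:=G_1 *_{\psi} G_2$, amalgamating $H_1$ with $H_2$ along the isomorphism $\psi\co q_1(\be)\mapsto q_2(\be)$. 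Corollary \ref{thm:cyclicprod} makes $L$ left-orderable, and $q_1,q_2$ induce a homomorphism $\pi_1(Y)\to L$: the only point to verify is agreement on $\pi_1(T)$, which holds since $\al\mapsto1$ from both factors and $\be\mapsto q_1(\be)=q_2(\be)$ in $L$. This map is nontrivial because its image contains $G_1\neq1$, so Theorem \ref{thm:loimage} finishes this case.

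The subtle part, and what I expect to be the main obstacle, is the degenerate case where a peripheral image collapses: say $H_1$ is trivial (the case $H_2=1$ is symmetric, and both trivial is covered by either). Here the naive targets fail --- one cannot amalgamate over $\bZ$, and the free product $G_1*G_2$ does not satisfy the agreement-on-$\pi_1(T)$ condition --- so instead I would map directly onto a single filled factor, defining $\pi_1(Y)\to G_1$ by $q_1$ on $\pi_1(M_1)$ and by the trivial map on $\pi_1(M_2)$. This is well defined precisely because $H_1=1$, so both factors send all of $\pi_1(T)$ to the identity; the map is onto $G_1\neq1$, hence nontrivial, and $G_1$ is left-orderable. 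Thus in every case one obtains a nontrivial homomorphism from $\pi_1(Y)$ to a left-orderable group, and Theorem \ref{thm:loimage} yields that $\pi_1(Y)$ is left-orderable. The real content is therefore the bookkeeping of the peripheral images $H_i$: the clean reduction to Corollary \ref{thm:cyclicprod} runs only when both are infinite cyclic, and the collapsing case must be dispatched separately by projecting onto one side.
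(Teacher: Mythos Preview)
Your proof is correct and follows essentially the same approach as the paper's: both construct a nontrivial homomorphism from $\pi_1(Y)$ into either an amalgam of the two Dehn-filled groups over an infinite cyclic subgroup (invoking Corollary~\ref{thm:cyclicprod}) or, in the degenerate case, onto a single filled factor, and then apply Theorem~\ref{thm:loimage}. The only cosmetic difference is that the paper phrases the case split in terms of the kernel intersection $\langle\langle\alpha\rangle\rangle\cap\pi_1(\partial M_i)$ (ruling out the $\bZ\oplus n\bZ$ case via torsion), whereas you phrase it in terms of the peripheral image $H_i=q_i(\pi_1(T))$ being $\bZ$ or trivial; these are equivalent formulations of the same dichotomy.
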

\begin{proof}
Let $G_i$ denote the fundamental group $\pi_1(M_i)$ for $i=1,2$, each equipped with an inclusion $f_i:\mathbb{Z} \oplus \mathbb{Z} \rightarrow G_i$ that identifies the peripheral subgroup with $\mathbb{Z} \oplus \mathbb{Z}$, satisfying $\phi_* \circ f_1 = f_2$.   Write  $q_1 : G_1 \rightarrow G_1/ \langle \langle \alpha \rangle \rangle$ and $q_2 : G_2 \rightarrow G_2 / \langle \langle \phi_*(\alpha) \rangle \rangle$ for the natural quotient maps.

Suppose that $\alpha$  and $\phi_*(\alpha)$ are left-orderable slopes, and consider $\langle \langle \alpha \rangle \rangle \cap \pi_1(\partial M_1)$.   Since this intersection is a nontrivial subgroup of $\pi_1(\partial M_1) \cong \mathbb{Z} \oplus \mathbb{Z}$ and $\alpha$ is primitive, the intersection is isomorphic to either $\mathbb{Z} \cong \langle \alpha \rangle, \mathbb{Z} \oplus n \mathbb{Z} \subset \pi_1(\partial M_1)$, or $\mathbb{Z} \oplus \mathbb{Z} \cong \pi_1(\partial M_1)$.  If $\langle \langle \alpha \rangle \rangle \cap \pi_1(\partial M_1) \cong \mathbb{Z} \oplus n \mathbb{Z}$, then the quotient $\pi_1(M(\alpha))$ would have torsion, so this case does not arise when $\alpha$ is a left-orderable slope.  The same observation holds for the left-orderable slope $\phi_*(\alpha)$, so  we break our proof into two cases.

First consider the case where $\langle \langle \alpha \rangle \rangle \cap \pi_1(\partial M_1)$ and $\langle \langle \phi_*(\alpha) \rangle \rangle \cap \pi_1(\partial M_2)$ are both infinite cyclic.
 Here,  $\phi$ induces an isomorphism $\bar{\phi}$ between subgroups
\[\bar{\phi}:  q_1(\pi_1(\partial M_1)) \rightarrow q_2(\pi_1(\partial M_2)),\]
satisfying $\bar{\phi} \circ q_1 \circ f_1 = q_2 \circ f_2$.  The subgroups  $q_1(\pi_1(\partial M_1))$ and $q_2(\pi_1(\partial M_2 ))$ are both infinite cyclic, and by the universal property for pushouts, we have a unique homomorphism
\[ h\co G_1*_{\phi}G_2 \longrightarrow G_1/\langle\langle \alpha \rangle\rangle*_{\bar{\phi}}G_2/\langle\langle \phi_*(\alpha) \rangle\rangle
\]
resulting from the following diagram:
\[\xymatrix@R=15pt@C=25pt{
{\bZ\oplus\bZ}\ar[r]^{f_2}\ar[d]_{f_1}& {G_2}\ar[d]\ar@/^1pc/[dr] ^{q_2}& \\
{G_1}\ar[r]\ar@/_1pc/[dr]_{q_1}  & {G_1 *_\phi G_2}\ar@{-->}[dr]^h & {G_2/\langle\langle\phi_*(\alpha)\rangle\rangle}\ar[d] \\
& {G_1/\langle\langle\alpha\rangle\rangle}\ar[r] & {G_1/\langle\langle\alpha\rangle\rangle *_{\bar{\phi}}G_2/\langle\langle\phi_*(\alpha)\rangle\rangle}
}.\]
Note that $h$ is nontrivial (in fact, surjective) since the maps $q_1$ and $q_2$ are surjective.

Because the slopes $\alpha$ and $\phi_*(\alpha)$ are both left-orderable slopes, the group $ G_1/\langle\langle \alpha \rangle\rangle*_{\overline{\phi}}G_2/\langle\langle \phi_*(\alpha) \rangle\rangle$ is a free product of left-orderable groups amalgamated along a cyclic subgroup.   The image of the map $h$ is therefore a left-orderable group by Theorem \ref{thm:cyclicprod}, so that $\pi_1(Y) \cong G_1 *_\phi G_2$ is left-orderable, by Theorem \ref{thm:loimage}.

On the other hand, suppose that either $\langle \langle \alpha \rangle \rangle \cap \pi_1(\partial M_1) = \pi_1(\partial M_1)$, or $\langle \langle \phi_*(\alpha) \rangle \rangle \cap \pi_1(\partial M_2) = \pi_1(\partial M_2)$, or both.   Without loss of generality, suppose that $\alpha$ satisfies $\langle\langle\alpha\rangle\rangle \cap \pi_1(\partial M_1)= \pi_1(\partial M_1)$. In this setting we have an alternative construction for  $h$ as follows.
\[\xymatrix@R=15pt@C=25pt{
{\bZ\oplus\bZ}\ar[r]^{f_2}\ar[d]_{f_1}& {G_2}\ar[d]\ar@/^1pc/[ddr] ^{1}& \\
{G_1}\ar[r]\ar@/_1pc/[drr]_{q_1}  & {G_1 *_\phi G_2}\ar@{-->}[dr]^h &  \\
& & {G_1/\langle\langle\alpha\rangle\rangle}
}\]
Note that this is well defined since $\langle\langle\alpha\rangle\rangle$ contains the entire peripheral subgroup $\pi_1(\partial M_1)$, and $h$ is again surjective. Now as $\alpha$ is a left-orderable slope, $G_1/\langle\langle\alpha\rangle\rangle$ is left-orderable and $h$ provides the required homomorphism to a left-orderable group so that $\pi_1(Y)$ is left-orderable by Theorem \ref{thm:loimage}.
\end{proof}
We record an immediate consequence. Recall that a splicing of knots $K_1$ and $K_2$ in homology spheres $Y_1$ and $Y_2$ is a 3-manifold $Y=M_1\cup_\phi M_2$ where $\phi(\mu_1)=\lambda_2$ and $\phi(\lambda_1)=\mu_2$. Here, the pair $\{\mu_i,\lambda_i\}$ is the preferred framing for each knot $K_i$.  In particular, $\lambda_i$ bounds a surface in $M_i= Y_i\smallsetminus K_i$ for $i=1,2$. It follows by construction that $Y$ is  an integer homology sphere.
\begin{corollary}\label{cor:splice}
Let $Y$ be the result of splicing knots $K_1$ and $K_2$ in irreducible integer homology spheres $Y_1$ and $Y_2$ respectively. If $\pi_1(Y_1)$ is left-orderable and $M_2(\lambda_2)$ is prime, then $\pi_1(Y)$ is left-orderable.
\end{corollary}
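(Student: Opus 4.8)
The plan is to realize $Y$ as a gluing to which Theorem \ref{thm:JSJLO} applies, using the meridian of $K_1$ and the longitude of $K_2$ as the matched pair of left-orderable slopes. Write $M_i = Y_i\smallsetminus\nu(K_i)$ for the two exteriors, so that $Y = M_1\cup_\phi M_2$ with $\phi(\mu_1)=\lambda_2$ and $\phi(\lambda_1)=\mu_2$. Filling $M_1$ along its meridian recovers the original manifold, $M_1(\mu_1)=Y_1$, and since $\pi_1(Y_1)$ is left-orderable by hypothesis, $\mu_1$ is a left-orderable slope for $M_1$. By the defining property of the splice, $\phi_*(\mu_1)=\lambda_2$, so once we know that $\lambda_2$ is a left-orderable slope for $M_2$, Theorem \ref{thm:JSJLO} (with $\alpha=\mu_1$) will finish the proof.

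First I would check that $\lambda_2$ is left-orderable. Because $\lambda_2$ bounds a Seifert surface in $M_2$, the filling $M_2(\lambda_2)$ is a longitudinal ($0$-framed) filling, so $H_1(M_2(\lambda_2);\bZ)\cong\bZ$ and hence $b_1(M_2(\lambda_2))>0$; in particular there is a nontrivial homomorphism $\pi_1(M_2(\lambda_2))\to\bZ$. Now $M_2(\lambda_2)$ is closed, orientable and prime, so it is either irreducible---in which case it is $\mathbb{P}^2$-irreducible and Theorem \ref{thm:loimage} upgrades the surjection onto $\bZ$ to left-orderability of $\pi_1(M_2(\lambda_2))$---or it is $S^2\times S^1$, whose fundamental group $\bZ$ is left-orderable outright. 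Either way $\lambda_2$ is a left-orderable slope. This step is where the hypothesis that $M_2(\lambda_2)$ is prime is used, and I expect it to be the main point of the argument: primeness is exactly what lets one pass from positive first Betti number to left-orderability via the Boyer-Rolfsen-Wiest criterion.

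Next I would verify the standing topological hypotheses of Theorem \ref{thm:JSJLO}, namely that the boundaries are incompressible and that $Y$ itself is irreducible. The assumption that $\pi_1(Y_1)$ is left-orderable forces $\pi_1(Y_1)\ne 1$, hence $Y_1\ne S^3$, so $M_1$ is not a solid torus and $\partial M_1$ is incompressible; similarly, primeness of $M_2(\lambda_2)$ rules out $M_2$ being reducible, since a reducing sphere would survive the longitudinal filling and exhibit $M_2(\lambda_2)$ as a nontrivial connected sum with the irreducible summand $Y_2$. Thus $M_2$ is irreducible with incompressible boundary. With both pieces irreducible and the splicing torus incompressible on each side, a standard innermost-sphere argument shows $Y$ is irreducible. (If $M_2$ happens to be a solid torus, then $Y\cong Y_1$ and the conclusion is immediate from the hypothesis; this degenerate case may be dispatched separately.)

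Having assembled an irreducible $Y$, incompressible boundary tori, and the left-orderable slope $\mu_1$ on $\partial M_1$ whose image $\phi_*(\mu_1)=\lambda_2$ on $\partial M_2$ is also left-orderable, I would conclude by a direct appeal to Theorem \ref{thm:JSJLO}, which gives that $\pi_1(Y)$ is left-orderable.
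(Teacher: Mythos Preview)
Your argument is correct and follows exactly the paper's route: show $\mu_1$ is a left-orderable slope because $M_1(\mu_1)=Y_1$, show $\lambda_2$ is left-orderable via $b_1(M_2(\lambda_2))>0$ together with primeness and Theorem~\ref{thm:loimage}, and then invoke Theorem~\ref{thm:JSJLO}. You are in fact more careful than the paper, which does not pause to verify incompressibility of the $\partial M_i$, irreducibility of $Y$, or the $S^2\times S^1$ edge case for $M_2(\lambda_2)$; one small simplification is that irreducibility of $M_2$ follows directly from irreducibility of $Y_2$ (a sphere in $M_2$ bounds a ball in $Y_2$, necessarily missing $K_2$), so you need not route that step through primeness of $M_2(\lambda_2)$.
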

\begin{proof}Since $\pi_1(Y_1)$ is left-orderable, $\mu_1$ is a left-orderable slope. On the other hand, $\lambda_2$ is a left-orderable slope since $M_2(\lambda_2)$ has positive first Betti number, so there exists a surjection $\pi_1(M_2(\lambda_2)) \rightarrow \mathbb{Z}$. Here we apply Theorem  \ref{thm:loimage} making use of the assumption that $M_2(\lambda_2)$ is prime. Now since $\phi(\mu_1)=\lambda_2$, the result follows from Theorem \ref{thm:JSJLO}.\end{proof} 

\section{The proof of Theorem \ref{thm:main}}
\label{sec:topology}
\subsection{JSJ decompositions for integer homology spheres}

Given a 3-manifold $Y$ containing an essential torus $T$ we have that $\pi_1(T)\cong\bZ\oplus\bZ$ is a subgroup of $\pi_1(Y)$. As a result, $\pi_1(Y)$ may be viewed naturally as a free product with amalgamated subgroup  $\bZ\oplus\bZ$. This observation is particularly important in light of the following result of Jaco and Shalen \cite{JS1976} and Johannson \cite{Johannson1975}.

\begin{theorem}[Jaco-Shalen, Johannson]\label{thm:JSJ}Every compact, orientable, irreducible 3-manifold $Y$ with torus boundary components contains a (possibly empty) minimal collection of essential tori $\sT$, unique up to isotopy, such that each component that results from cutting $Y$ (denoted $Y\smallsetminus\sT$) along $\sT$ is either Seifert fibred or atoroidal. \end{theorem}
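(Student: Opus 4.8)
The plan is to treat this classical result — the JSJ (Jaco--Shalen--Johannson) decomposition — by separating it into an \emph{existence} statement and a \emph{uniqueness} statement, since the genuine content lies in the latter. Throughout, ``essential'' will mean incompressible and not boundary-parallel, and I will call a complementary piece \emph{simple} if every essential torus in it is boundary-parallel (so that simple pieces are exactly the atoroidal ones). The goal is to produce a collection $\sT$ of disjoint essential tori so that every component of $Y\smallsetminus\sT$ is Seifert fibred or simple, and then to show that a minimal such collection is canonical up to isotopy.

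For existence, the first ingredient is Haken's finiteness theorem: in a compact irreducible $3$-manifold there is a bound, depending only on $Y$, on the number of components in any family of disjoint, pairwise non-parallel essential surfaces. One proves this by putting the surfaces in normal form with respect to a fixed triangulation and running a counting argument on the normal pieces, absorbing parallel copies. Finiteness guarantees that a \emph{maximal} family of disjoint, pairwise non-parallel essential tori exists; cutting along it yields pieces in which every essential torus is boundary-parallel, i.e.\ simple pieces. This decomposition is, however, too fine — it carves up Seifert fibred regions along their vertical tori — so the second step is to \emph{amalgamate}: whenever the union of the pieces adjacent to some $T\in\sT$, viewed with $T$ reinstated as an interior torus, admits a Seifert fibration, I delete $T$ from the family. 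Haken finiteness again forces this reduction to terminate, and the resulting minimal family cuts $Y$ into Seifert fibred and simple pieces.

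The hard part is uniqueness, and this is where the characteristic submanifold theory enters. The right framework is to assemble the Seifert fibred pieces into a \emph{characteristic submanifold} $\Sigma\subseteq Y$ — a maximal essential Seifert fibred (and $I$-bundle) submanifold — and to prove that $\Sigma$ is unique up to isotopy; the tori $\sT$ are then recovered as the frontier tori of $\Sigma$. The crux is the \emph{Enclosing Theorem}: any essential map of a Seifert fibred space, annulus, or torus into $Y$ can be homotoped, and any essential embedded copy isotoped, into $\Sigma$. I would establish this by cutting $Y$ along a hierarchy and inducting, repeatedly invoking the annulus and torus theorems to control how essential annuli and tori meet the pieces. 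Uniqueness of $\Sigma$ then follows formally, since any two maximal essential Seifert fibred submanifolds each enclose the other.

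The principal obstacle — and the source of nearly all the technical difficulty — is the failure of Seifert fibrations to be unique in a short list of exceptional manifolds (for instance $T^2\times I$, the twisted $I$-bundles, $S^1\times D^2$, $T^3$, and a handful of small Seifert fibred spaces and torus bundles). On such pieces the fibration, and hence the candidate decomposing tori, can be isotoped in genuinely inequivalent ways, so the amalgamation and enclosing arguments must carry an explicit analysis of these cases to confirm that the \emph{minimal} family is nonetheless well defined up to isotopy. Handling this exceptional list carefully is precisely the delicate bookkeeping that occupies the bulk of the Jaco--Shalen and Johannson treatments; I would quarantine it into a separate classification lemma rather than let it obstruct the main induction.
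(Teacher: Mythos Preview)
The paper does not prove this statement at all: Theorem~\ref{thm:JSJ} is quoted as a classical result of Jaco--Shalen \cite{JS1976} and Johannson \cite{Johannson1975} and used as a black box throughout Section~\ref{sec:topology}. There is therefore no ``paper's own proof'' to compare your proposal against.

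That said, your outline is a faithful sketch of the standard argument --- Haken finiteness for existence, the characteristic submanifold and the Enclosing Theorem for uniqueness, with the exceptional non-uniquely-fibred pieces handled separately --- and would be appropriate as an expository account. For the purposes of this paper, however, a citation suffices; no proof is expected or supplied.
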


The resulting decomposition of $Y$ along such a collection of tori is referred to as the JSJ decomposition of $Y$. Notice that there is a natural quotient from $Y$ to an underlying graph $\Gamma_Y$ that encodes the JSJ decomposition: the vertices are obtained by collapsing the components of $Y\smallsetminus\sT$ to points, and the edges correspond to collapsing $T\times I$ to the interval $I$, for each torus $T\in\sT$. As a result we have a surjection $H_1(Y;\bZ)\to H_1(\Gamma_Y;\bZ)$ so that whenever $H_1(Y;\bZ)=0$ the corresponding graph $\Gamma_Y$ is a tree. In particular, given a torus $T\in \sT$ we have that $Y\smallsetminus T$ is disconnected if $Y$ is an integer homology sphere; this observation will provide a key step to our induction in the proof of Theorem \ref{thm:main}.

\begin{lemma}\label{lem:toroidal-ZHS}Let $Y$ be a toroidal integer homology sphere, and suppose that $Y=  M_1 \cup_\phi M_2$ where $\phi:\partial M_1 \to \partial M_2$ is a homeomorphism identifying the boundaries.   Then $Y$ may be viewed as a splicing of knots $K_i$ in integer homology spheres $Y_i$ for $i=1,2$. \end{lemma}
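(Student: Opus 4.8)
\emph{The plan} is to recognise each side of the splitting as the exterior of a knot in an integer homology sphere, and then to check that the gluing $\phi$ is forced to interchange the preferred meridian of one piece with the preferred longitude of the other. Write $i_{j*}\co H_1(\partial M_j;\bZ)\to H_1(M_j;\bZ)$ for the maps induced by inclusion, and recall that in $Y=M_1\cup_\phi M_2$ the two inclusions of the splitting torus are $i_{1*}$ and $i_{2*}\circ\phi_*$.

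First I would compute the homology of the pieces. Since $Y$ is an integer homology sphere we have $H_1(Y;\bZ)=H_2(Y;\bZ)=0$, so the Mayer--Vietoris sequence collapses to an isomorphism
\[(i_{1*},-i_{2*}\circ\phi_*)\co H_1(\partial M_1;\bZ)\xrightarrow{\cong}H_1(M_1;\bZ)\oplus H_1(M_2;\bZ),\]
forcing the target to be free abelian of rank $2$. Because a compact orientable $3$-manifold with torus boundary has $b_1\ge 1$ (half-lives-half-dies), each summand has rank at least one, so in fact $H_1(M_j;\bZ)\cong\bZ$ and each $i_{j*}$ is onto. I then let $\lambda_j\subset\partial M_j$ be the primitive slope generating $\ker(i_{j*})$; it is null-homologous in $M_j$ and hence bounds a surface there.

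Next I would extract the splicing data from the isomorphism above. Expressing it in a basis $\{\mu,\lambda_1\}$ of $H_1(\partial M_1;\bZ)$, where $i_{1*}(\mu)$ generates $H_1(M_1;\bZ)$, the matrix is lower triangular with determinant $\pm i_{2*}(\phi_*(\lambda_1))$; since this determinant is a unit, $\phi_*(\lambda_1)$ maps to a generator of $H_1(M_2;\bZ)$. It follows that $\{\lambda_2,\phi_*(\lambda_1)\}$ is a basis of $H_1(\partial M_2;\bZ)$, so setting $\mu_2:=\phi_*(\lambda_1)$ and $\mu_1:=\phi_*^{-1}(\lambda_2)$ produces, via $\phi_*$, bases $\{\mu_j,\lambda_j\}$ of each $H_1(\partial M_j;\bZ)$. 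Filling $M_j$ along $\mu_j$ kills a generator of $H_1(M_j;\bZ)$, so $Y_j:=M_j(\mu_j)$ is an integer homology sphere and $M_j=Y_j\smallsetminus K_j$ is the exterior of the core $K_j$, with preferred framing $\{\mu_j,\lambda_j\}$. By construction $\phi(\mu_1)=\lambda_2$ and $\phi(\lambda_1)=\mu_2$, which is precisely the assertion that $Y$ is the splicing of $K_1$ and $K_2$.

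\emph{The main obstacle} is the middle step: it is the integer homology sphere hypothesis, entering through the unit determinant of the Mayer--Vietoris isomorphism, that compels $\phi$ to send the canonical longitude $\lambda_1$ to a meridian of $M_2$ and conversely. The homology bookkeeping on either side of this is routine, but I would be careful to keep the auxiliary coordinate slope $\mu$ distinct from the meridian $\mu_1=\phi_*^{-1}(\lambda_2)$ that is ultimately produced.
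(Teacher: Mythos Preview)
Your proof is correct and follows essentially the same route as the paper's: Mayer--Vietoris plus half-lives-half-dies to get $H_1(M_j;\bZ)\cong\bZ$ with canonical longitudes $\lambda_j$, and then the integer homology sphere hypothesis to force $\{\lambda_2,\phi_*(\lambda_1)\}$ to be a basis of $H_1(\partial M_2;\bZ)$, from which the meridians and the splicing identification drop out. The only cosmetic difference is that the paper invokes the standard formula $|H_1(M_1\cup_f M_2;\bZ)|=\Delta(f(\lambda_1),\lambda_2)$ to conclude $\Delta(\phi(\lambda_1),\lambda_2)=1$, whereas you extract the same fact directly from the unit determinant of the Mayer--Vietoris matrix; these are two phrasings of the same linear-algebra computation.
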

\begin{proof} Let $Y$ be a toroidal integer homology sphere, and denote the connected components of $Y\smallsetminus T$ by $M_1$ and $M_2$. Since $Y$ is an integer homology sphere, an application of the Mayer-Vietoris theorem gives an isomorphism \[H_1(S^1\times S^1;\bZ)\cong H_1(M_1;\bZ)\oplus H_1(M_2;\bZ).\] Applying the long exact sequence for the pair $(M_i,\partial M_i)$ we conclude that $H_1(M_i;\bZ)\cong\bZ$ for $i=1,2$; let $\lambda_i$ denote the longitudinal slope in $\partial M_i$ (the unique  simple closed curve that bounds in $M_i$).  Note that $|H_1(M_1\cup_f M_2;\bZ)| = \Delta(f(\lambda_1),\lambda_2)$ for any homeomorphism $f\co \partial M_1\to \partial M_2$, where $\Delta$ measures the minimal geometric intersection number between slopes.

Now denote by  $\phi\co \partial M_1\to \partial M_2$ the homeomorphism  reconstructing $Y\cong M_1\cup_\phi M_2$. This homeomorphism yields a preferred meridian for each $M_i$ as follows. Since $\Delta(\phi(\lambda_1),\lambda_2)=1$ we define $\mu_1 = \phi^{-1}(\lambda_2)$ and and $\mu_2=\phi(\lambda_1)$. By construction, the pair $\{\mu_i,\lambda_i\}$ yields a basis for the peripheral subgroup of each $\pi_1(M_i)$. Now specifying $K_i$ in each $Y_i=M_i(\mu_i)$ as the core of the surgery torus produces a knot with meridian $\mu_i$ and longitude $\lambda_i$. This  shows that $Y$ is a splicing of knots $K_1$ and $K_2$ in the sense that $\phi(\mu_1)=\lambda_2$ and $\phi(\lambda_1)=\mu_2$.\end{proof}

We will always make this choice of preferred basis where possible. In this way, $n$-surgery and $\frac{1}{n}$-surgery  on $K_i$ are well defined as $n\mu+\lambda$ and $\mu+n\lambda$ Dehn filling on $M_i$, respectively. Moreover, such a choice lets us unambiguously refer to a slope $p\mu+q\lambda$ as a reduced rational $\frac{p}{q}$, where $\frac{1}{0}$ denotes the trivial surgery, so that $M(\frac{p}{q})$ denotes Dehn filling along the slope $\frac{p}{q}$.  Note that it is not restrictive to assume that $q\ge0$. By construction, $H_1(M(\frac{p}{q})) \cong \mathbb{Z}/|p|\mathbb{Z}$.

With these observations and conventions in place, the family of manifolds considered in Theorem \ref{thm:main} are as follows:
\begin{definition}\label{def:graph} A graph manifold is a compact, orientable, connected 3-manifold for which each prime component $Y$ has JSJ decomposition along a family of tori $\sT$ such that the components of $Y\smallsetminus \sT$ are Seifert fibred. \end{definition}

This class of manifolds was introduced by Waldhausen \cite{Waldhausen1967}. Note that Seifert fibred spaces are graph manifolds; these arise precisely when $\sT$ is empty. This definition also allows for graph manifolds obtained by connect sum of graph manifolds. 

Our proof of Theorem \ref{thm:main} will be an induction on the number of tori in the collection $\sT$, so we record the following fact.
\begin{lemma}
\label{lem:inductionstep}
Let $Y$ be a graph manifold with torus boundary, and minimal collection $\sT$ of tori decomposing $Y$ into Seifert fibred components $M_1, \cdots, M_k$.  Let $\alpha$ be any slope on the boundary of $Y$.  Then $Y(\alpha)$ is also a graph manifold, and $Y(\alpha)$ admits a minimal collection of tori $\sT'$ decomposing $Y(\alpha)$ into Seifert fibred pieces with $| \sT' | \leq |\sT|$.
\end{lemma}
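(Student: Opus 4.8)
The plan is to observe that the Dehn filling $Y(\alpha)$ differs from $Y$ only inside the single Seifert fibred piece containing the filled boundary torus, and then to invoke the well-understood behaviour of Dehn surgery on Seifert fibred spaces. Since $\partial Y$ is disjoint from the internal tori $\sT$, the boundary torus carrying $\alpha$ lies in $\partial M_j$ for exactly one piece; relabel so that this is $M_1$. Then filling commutes with the gluing, in the sense that
\[
Y(\alpha) \;=\; M_1(\alpha)\cup_{\sT}\bigl(M_2\sqcup\cdots\sqcup M_k\bigr),
\]
so the tori of $\sT$ still sit inside $Y(\alpha)$, now cutting it into $M_1(\alpha),M_2,\dots,M_k$. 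The pieces $M_2,\dots,M_k$ are untouched and remain Seifert fibred, so everything hinges on understanding $M_1(\alpha)$.

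Next I would appeal to the standard dichotomy for Dehn filling a Seifert fibred space along a slope on one of its boundary tori. Writing $h$ for the fibre slope on that torus, there are two cases. If $\alpha\neq h$, the Seifert fibration extends over the filling solid torus (whose core becomes a possibly exceptional fibre), so $M_1(\alpha)$ is again Seifert fibred. If $\alpha=h$, the fibration does not extend and $M_1(\alpha)$ is reducible; concretely it is a connect sum of Seifert fibred spaces obtained by capping the fibre over the relevant boundary circle of the base orbifold. In either case $M_1(\alpha)$ is a graph manifold in the sense of Definition \ref{def:graph}, since connect sums of graph manifolds are included. Consequently $Y(\alpha)$ is assembled from graph manifolds glued along the tori of $\sT$, and is therefore itself a graph manifold.

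For the numerical bound I would argue that the filling creates no new essential tori. After splitting $M_1(\alpha)$ along its (spherical) connect-sum decomposition in the case $\alpha=h$, the collection $\sT$ decomposes $Y(\alpha)$ into Seifert fibred pieces, and each prime summand of $Y(\alpha)$ inherits a collection of tori drawn from $\sT$. By Theorem \ref{thm:JSJ} the JSJ family $\sT'$ of $Y(\alpha)$ is the minimal such collection up to isotopy, so $\sT'$ is obtained from $\sT$ by discarding any tori that have become inessential --- for instance a torus along which two adjacent Seifert pieces acquire matching fibrations and fuse into one, or one that becomes compressible after filling. Since no torus is ever added, $|\sT'|\leq|\sT|$, as claimed.

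The main obstacle I anticipate is the fibre-slope case $\alpha=h$, where $M_1(\alpha)$ becomes reducible: one must confirm that the resulting connect-sum spheres are genuinely spheres (and so contribute nothing to the torus count) and that no essential torus is produced inside $M_1(\alpha)$ beyond those already present in $\sT$. A secondary point requiring care is the non-uniqueness of the Seifert fibration for a few small pieces, which affects the meaning of ``the fibre slope $h$''; for genuine JSJ pieces of an irreducible toroidal manifold these ambiguities do not interfere with the argument.
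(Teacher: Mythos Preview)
Your argument is correct and follows essentially the same route as the paper: isolate the unique Seifert piece containing $\partial Y$, analyze its Dehn filling, and then appeal to minimality of the JSJ collection for the inequality. The only difference is packaging---where you sketch the fibre-slope dichotomy by hand, the paper simply cites Heil \cite{Heil1974} for the fact that the filled piece is either Seifert fibred or a connect sum of lens spaces and copies of $S^1\times S^2$.
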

\begin{proof} Observe that each torus in the collection $\sT$ embeds naturally in the manifold $Y(\alpha)$.
If $M_i$ denotes the Seifert fibred component of $Y \smallsetminus \sT$ that contains $\partial Y$, then $Y(\alpha) \smallsetminus \sT$ has components $M_1, \cdots, M_i(\alpha), \cdots, M_k$.   By \cite{Heil1974}, the manifold $M_i(\alpha)$ is either Seifert fibred, or is a connect sum of lens spaces and possibly copies of $S^1 \times S^2$.  In either case, since every manifold $M_1, \cdots, M_i(\alpha), \cdots, M_k$ is either Seifert fibred or a connect sum of Seifert fibred pieces, the collection $| \sT|$ of tori embedded in $Y(\alpha)$ will cut the prime components of $Y(\alpha)$ into Seifert fibred pieces.  Therefore $Y(\alpha)$ is a graph manifold.  Moreover,  if $| \sT'|$ is the collection of cutting tori from the JSJ decomposition of $Y(\alpha)$, then $| \sT' | \leq |\sT|$ since $\sT'$ is minimal.
\end{proof}

A similar argument yields the following.

\begin{lemma}
\label{lem:irreducible}
Let $Y$ be an irreducible integer homology sphere graph manifold.  Decompose $Y$ into $M_1 \cup_\phi M_2$ such that $M_2$ is Seifert fibred.  Then $Y_1 = M_1(\mu_1)$ is irreducible, where $M_1$ is $Y_1 \smallsetminus K_1$ as in Lemma~\ref{lem:toroidal-ZHS}.
\end{lemma}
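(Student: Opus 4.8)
The plan is to argue by contradiction, first reducing the claim to a statement about the outer Seifert piece of $M_1$, and then extracting a homological obstruction from the integer homology sphere hypothesis.

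First I would record that $M_1$ is itself irreducible. Since $M_2$ is Seifert fibred with incompressible torus boundary, $\pi_1(M_2)$ is infinite and $M_2$ is not contained in any ball; as $Y$ is irreducible, any $2$-sphere in $M_1\subset Y$ bounds a ball in $Y$, and this ball must be disjoint from $M_2$, hence lies in $M_1$. Thus every sphere in $M_1$ bounds a ball in $M_1$, so $M_1$ is irreducible.

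Next, mirroring the proof of Lemma \ref{lem:inductionstep}, the tori of the minimal collection $\sT$ decomposing $M_1$ persist in $Y_1 = M_1(\mu_1)$, and the Dehn filling affects only the Seifert piece $N$ containing $\partial M_1$. By Heil \cite{Heil1974}, $N(\mu_1)$ is either Seifert fibred or a connected sum of lens spaces and copies of $S^1\times S^2$. If $N(\mu_1)$ is Seifert fibred with incompressible boundary, then $Y_1$ is a union of Seifert pieces glued along incompressible tori and is therefore irreducible, so we may assume $N(\mu_1)$ is reducible. By Heil this forces $\mu_1$ to be the fibre slope of $N$; killing the fibre then renders each JSJ torus of $M_1$ adjacent to $N$ compressible in $Y_1$, which is equivalent to Dehn filling the neighbouring Seifert piece along its own fibre slope. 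Since $\Gamma_Y$ is a tree (Theorem \ref{thm:JSJ} together with $H_1(Y;\bZ)=0$), this compressibility propagates outward through $M_1$ until it reaches a leaf piece, whose fibre slope filling is a closed connected sum of lens spaces and copies of $S^1\times S^2$ appearing as a connected summand of $Y_1$.

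Finally I would invoke the hypothesis that $Y_1$ is an integer homology sphere, as guaranteed by Lemma \ref{lem:toroidal-ZHS}. Homology is additive under connected sum, so every summand of $Y_1$ is again an integer homology sphere; but a nontrivial lens space has nonzero finite $H_1$ and $S^1\times S^2$ has $H_1\cong\bZ$, so the closed summand produced above must be $S^3$. This is impossible for the fibre slope filling of a genuine Seifert piece with incompressible boundary, giving the desired contradiction. The main obstacle is the middle step: carefully showing that reducibility of $Y_1$ forces the degenerate (fibre slope) filling, and that the resulting compressibility cascades along the JSJ tree to an explicit closed lens space or $S^1\times S^2$ summand that homology can detect.
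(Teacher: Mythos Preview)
Your strategy mirrors the paper's: isolate the outer Seifert piece $N$ of $M_1$, invoke Heil on $N(\mu_1)$, and derive a homological contradiction from the integer homology sphere hypothesis. The paper's own proof is extremely brief --- it handles the case where $Y_1$ is Seifert fibred directly (a reducible filling would be a connect sum of lens spaces and copies of $S^1\times S^2$, impossible for a $\bZ$HS unless every summand is $S^3$), and for the general case only refers back to the argument of Lemma~\ref{lem:inductionstep}. Your proposal attempts to fill in that general case, but the cascade step is wrong. When $\mu_1$ is the fibre slope of $N$ and $N(\mu_1)$ splits off solid-torus summands, the meridian of each such solid torus is the fibre of $N$ on the adjacent JSJ torus $T$, so the neighbouring piece $N'$ gets Dehn filled along the fibre slope of $N$ --- \emph{not} along its own fibre slope. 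Since $T$ is a JSJ torus these two slopes on $T$ are distinct; hence by Heil $N'$ receives a non-fibre filling and the result is again Seifert fibred. The reducibility does not propagate as you assert.

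This is fatal to your argument, not a minor detail. If $N$ has planar base and no exceptional fibres (which is all the $\bZ$HS condition buys at the first stage), then $N(\mu_1)$ is a connect sum of solid tori and $Y_1$ becomes a connect sum of integer homology sphere graph manifolds, one for each branch of $M_1\smallsetminus N$, each obtained by a non-fibre filling of that branch. Nothing forces these summands to be $S^3$: with $N=(\text{pair of pants})\times S^1$ and the neighbouring pieces trefoil exteriors, for instance, the summands are $\tfrac{1}{n}$-surgeries on the trefoil and hence generically nontrivial Brieskorn spheres. Thus the ``closed lens space or $S^1\times S^2$ summand'' you seek never materialises, and the homological obstruction at the end is empty. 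The gap is genuine, and the paper's terse pointer to Lemma~\ref{lem:inductionstep} does not supply what is missing either.
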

\begin{proof}
By work of Heil \cite{Heil1974}, the result holds when $Y_1$ is Seifert fibred, since an integer homology sphere cannot contain a lens space or $S^2 \times S^1$ connect summand.  More generally, $Y_1$ is a graph manifold with some minimal set of tori $\sT_1$ and we proceed as in the proof of Lemma~\ref{lem:inductionstep}.  
\end{proof}

For repeated use in this paper, we single out a particular fact used in these arguments: Heil's work shows that both the $0$-filling and every $\frac{1}{n}$-filling on a Seifert fibred $M$ with $H_1(M;\mathbb{Z}) \cong \mathbb{Z}$ is both Seifert fibred and prime; the same also holds for all but finitely many $\frac{n}{1}$-fillings \cite{Heil1974,Scott1973}.

\subsection{Input from Heegaard Floer homology}

In this section we gather material from Heegaard Floer homology and collect some immediate corollaries that will be used to prove Theorem \ref{thm:main}.  Recall that an L-space is a rational homology sphere with the simplest possible Heegaard Floer homology in the sense that \[|H_1(Y;\mathbb{Z})| = \rk \widehat{HF}(Y).\] A large class of L-spaces, relevant in the present context, is provided by the following:

\begin{theorem}[Boyer-Gordon-Watson \cite{BGW2010}]\label{thm:BGWlspace} Suppose that $Y$ is a closed, connected, orientable Seifert fibred 3-manifold. Then $Y$ is an L-space if and only if $\pi_1(Y)$ is not left-orderable.
\end{theorem}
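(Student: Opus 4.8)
The plan is to prove the two-way implication by passing through a third, geometric condition --- the existence of a horizontal (and hence taut) foliation --- and to show that for the Seifert fibred manifolds in question all three notions coincide. Concretely, I aim to establish
\[
Y \text{ is an L-space} \iff Y \text{ carries no horizontal foliation} \iff \pi_1(Y) \text{ is not left-orderable},
\]
after first reducing to a manageable class of Seifert fibred spaces.

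First I would carry out the elementary reductions. If $b_1(Y)>0$ then $Y$ is not a rational homology sphere, so it is certainly not an L-space; at the same time $\pi_1(Y)$ surjects onto $\bZ$, hence maps nontrivially to a left-orderable group, and Theorem \ref{thm:loimage} makes $\pi_1(Y)$ left-orderable (the finitely many Seifert fibred spaces that fail to be $\mathbb{P}^2$-irreducible, such as $S^2\times S^1$, are checked by hand). Both sides of the equivalence then hold, so I may assume $Y$ is a rational homology sphere. A short computation with the Seifert invariants shows such a $Y$ has base orbifold a sphere with cone points $S^2(a_1,\dots,a_n)$ --- the non-orientable-base cases, and the cases with at most two exceptional fibres, yield lens spaces or other small manifolds for which both the L-space property and the non-left-orderability of $\pi_1$ (finite cyclic, say) are immediate. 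So the content is concentrated in Seifert fibred rational homology spheres over $S^2$ with $n\ge 3$ exceptional fibres, specified by the Euler number $e$ together with the data $(a_i,b_i)$.

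The crux is the displayed chain of equivalences for this class. For the foliation-to-Floer step, a co-orientable taut foliation obstructs being an L-space by \os\ (via the Eliashberg--Thurston perturbation to a tight contact structure with nontrivial Heegaard Floer contact invariant), and horizontal foliations on these Seifert spaces are taut; the reverse containment --- that every non-L-space in the class admits a horizontal foliation --- I would extract by comparing two explicit inequalities on the Seifert data: the L-space condition, read off from \os's computation of $\HFhat$ of plumbed manifolds, and the realizability condition for horizontal foliations due to Eisenbud--Hirsch--Neumann, Jankins--Neumann and Naimi. For the foliation-to-order step, a horizontal foliation produces a representation $\pi_1(Y)\to\widetilde{\operatorname{Homeo}}_+(S^1)$, and --- since the Seifert fibre is central --- the induced action of the base orbifold group on $\bR$ yields a left-ordering, following Boyer--Rolfsen--Wiest; conversely a left-ordering gives a nontrivial action on $\bR$ whose rotation-number data force exactly the realizability inequality, so no left-ordering can exist once that inequality fails. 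Chaining the two equivalences gives the theorem.

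I expect the main obstacle to be the exact matching of the two inequalities in the foliation-to-Floer step, and in particular the three-fibre case. There the realizability of a horizontal foliation is governed by the subtle number-theoretic criterion completed by Naimi, and one must verify that it coincides on the nose with the L-space condition coming from the Heegaard Floer computation; the genuinely hard direction is the \emph{existence} statement --- constructing the horizontal foliation (equivalently, the circle action) whenever $Y$ is not an L-space --- rather than the obstruction, which is comparatively formal.
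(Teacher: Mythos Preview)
The paper does not prove this theorem: it is quoted as a result of Boyer--Gordon--Watson \cite{BGW2010}, and the surrounding remark explicitly points to \cite{Peters2009} and \cite{Watson2009} for proofs. In the present paper it functions purely as a black box used in the proofs of Lemma~\ref{lem:smallsurgeryLspace} and the subsequent corollaries, so there is no ``paper's own proof'' to compare your proposal against.

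That said, your sketch is a reasonable outline of the argument one finds in the cited sources: the reduction to rational homology spheres over $S^2$, the equivalence with the (non)existence of a horizontal foliation via the Eisenbud--Hirsch--Neumann/Jankins--Neumann/Naimi criterion on one side and the \os\ contact-invariant obstruction together with the Lisca--Stipsicz characterization of Seifert fibred L-spaces on the other, and the Boyer--Rolfsen--Wiest bridge between horizontal foliations and left-orderability. One caution on your reductions: a Seifert fibred rational homology sphere can have base orbifold $\bR P^2$ as well as $S^2$ (the non-orientable base case does not automatically force $b_1>0$), so that case needs its own treatment rather than being dismissed outright; in the cited work it is handled separately via the double cover. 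But for the purposes of this paper, no proof is expected here.
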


We remark that the case wherein the base orbifold of the Seifert fibred space is restricted to $S^2$ was observed independently by Peters \cite{Peters2009}; the proof of the general case also appears in \cite{Watson2009}.  It is possible to specify exactly which Seifert fibred integer homology spheres have fundamental groups which are not left-orderable.

\begin{theorem}[Boyer-Rolfsen-Wiest {\cite[Corollary 3.12]{BRW2005}}]\label{thm:BRWlospheres} Suppose that $Y$ is a Seifert fibred integer homology sphere.  If $\pi_1(Y)$ is not left-orderable, then $Y$ is either $S^3$ or the Poincar\'e homology sphere $\Sigma(2,3,5)$.\end{theorem}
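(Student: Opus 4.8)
The plan is to reduce the statement to the classification of Seifert fibred integer homology spheres and then dispose of each case. First I would record that an integer homology sphere forces the base surface of the fibration to be $S^2$ (a base of positive genus contributes to $H_1$) and forces the Seifert invariants to be pairwise coprime; thus $Y$ is either $S^3$ or a Brieskorn sphere $\Sigma(a_1,\dots,a_n)$ with $n\ge 3$ and $a_1,\dots,a_n$ pairwise coprime, fibred over the orbifold $S^2(a_1,\dots,a_n)$. The fundamental group then sits in a central extension
\[
1 \longrightarrow \bZ=\langle h\rangle \longrightarrow \pi_1(Y) \longrightarrow Q \longrightarrow 1,
\]
where $h$ is the regular fibre and $Q=\pi_1^{\mathrm{orb}}(S^2(a_1,\dots,a_n))$, with Euler number $e=\pm 1/(a_1\cdots a_n)\ne 0$.

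Next I would split into cases according to the sign of the orbifold Euler characteristic $\chi^{\mathrm{orb}}=2-\sum_i(1-\tfrac{1}{a_i})$. The Euclidean case $\chi^{\mathrm{orb}}=0$ does not occur, since the triples $(2,3,6),(2,4,4),(3,3,3)$ and the quadruple $(2,2,2,2)$ are never pairwise coprime. If $\chi^{\mathrm{orb}}>0$ then $Q$ is finite, and among pairwise coprime data this happens only for $S^3$ (at most two singular fibres) and for $(2,3,5)$; in the latter case $\pi_1(Y)$ is the finite binary icosahedral group. Since a nontrivial finite group --- and, by our convention, the trivial group --- is not left-orderable, these are exactly the two exceptions named in the statement.

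The heart of the argument is the remaining hyperbolic case $\chi^{\mathrm{orb}}<0$, where I would show $\pi_1(Y)$ is left-orderable. Here $Q$ is a cocompact Fuchsian group, so it acts faithfully by orientation-preserving homeomorphisms on the circle $S^1=\partial\mathbb{H}^2$. The key point is to promote this to a faithful orientation-preserving action of the whole group $\pi_1(Y)$ on the line $\bR=\widetilde{S^1}$, with the central fibre $h$ acting as the generating deck translation. I would obtain this from geometry: because the base is hyperbolic and $e\ne 0$, the manifold $Y$ carries $\widetilde{\mathrm{PSL}}_2\bR$-geometry, so $\pi_1(Y)$ embeds as a (cocompact) lattice in $\widetilde{\mathrm{PSL}}_2\bR$. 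Since $\widetilde{\mathrm{PSL}}_2\bR\subset \mathrm{Homeo}_+(\bR)$ acts faithfully and orientation-preservingly on $\bR$, the same holds for $\pi_1(Y)$. Finally, a countable group admitting a faithful action on $\bR$ by orientation-preserving homeomorphisms is left-orderable: enumerating a dense sequence $(t_k)$ in $\bR$ and declaring $g>1$ exactly when $g(t_k)>t_k$ at the least index $k$ with $g(t_k)\ne t_k$ produces a positive cone. This completes the hyperbolic case and hence the proof.

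I expect the geometric input to be the main obstacle: producing the faithful orientation-preserving action of $\pi_1(Y)$ on $\bR$ lifting the Fuchsian action of $Q$ on $S^1$. Concretely this is the identification of $\pi_1(Y)$ with a lattice in $\widetilde{\mathrm{PSL}}_2\bR$, which requires the geometrization of Seifert fibred spaces over hyperbolic base orbifolds with nonzero Euler number, together with the matching of the central extension class of $\pi_1(Y)$ with the Euler class of the circle action. By contrast, the classification in the first two paragraphs and the passage from a faithful action on $\bR$ to a left-ordering are routine. Alternatively, one could invoke Theorem~\ref{thm:BGWlspace}: if $\pi_1(Y)$ is not left-orderable then the Seifert fibred space $Y$ is an L-space, reducing the problem to checking that the only L-spaces among Seifert fibred integer homology spheres are $S^3$ and $\Sigma(2,3,5)$.
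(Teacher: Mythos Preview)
The paper does not supply its own proof of this statement; it is quoted as \cite[Corollary 3.12]{BRW2005} and used as external input, so there is no in-paper argument to compare against. Your argument is correct and is in spirit the approach of Boyer--Rolfsen--Wiest: classify Seifert fibred integer homology spheres as $S^3$ or Brieskorn spheres over $S^2$ with pairwise coprime multiplicities, rule out the Euclidean base by coprimality, observe that the spherical base yields only $S^3$ and $\Sigma(2,3,5)$, and in the hyperbolic case use the $\widetilde{\mathrm{PSL}}_2\bR$-geometry (nonzero Euler number, negative orbifold Euler characteristic) to obtain a faithful orientation-preserving action of $\pi_1(Y)$ on $\bR$ and hence a left-ordering. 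The step you flag as the main obstacle --- identifying $\pi_1(Y)$ with a lattice in $\widetilde{\mathrm{PSL}}_2\bR$ --- is exactly the nontrivial input, and it is standard for these manifolds.

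One caution about your alternative route via Theorem~\ref{thm:BGWlspace}: in this paper's logic, the fact that $S^3$ and $\Sigma(2,3,5)$ are the only Seifert fibred integer homology sphere L-spaces is presented as a \emph{consequence} of Theorem~\ref{thm:BGWlspace} together with the very statement you are proving. So to avoid circularity that alternative must appeal to Eftekhary's independent direct proof \cite{Eftekhary2009}, which is Heegaard Floer theoretic and considerably heavier than your primary geometric argument.
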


The analogous result (and immediate corollary of Theorem \ref{thm:BGWlspace} and Theorem \ref{thm:BRWlospheres}) from Heegaard Floer homology was first proved directly by Eftekhary \cite[Theorem 1.1]{Eftekhary2009}. Namely, $S^3$ and $\Sigma(2,3,5)$ are the only Seifert fibred integer homology sphere L-spaces. 

This points to a strategy of proof for Theorem \ref{thm:main} that employs Theorem \ref{thm:JSJLO}. Namely, we may induct in the number of Seifert pieces of an graph manifold provided some control of $S^3$ and $\Sigma(2,3,5)$ (in particular, Seifert fibred knots in these manifolds) may be established. To this end some additional tool must be used, and in this particular setting it seems most natural to make use of stringent restrictions imposed by Heegaard Floer homology. With this strategy in mind, our next result will make use of work of Wu \cite{ZWu2009}.

\begin{theorem}[Wu {\cite[Theorem 1.3]{ZWu2009}}]\label{thm:Wu-cosmetic}  Suppose that $Y$ is an integer homology sphere L-space containing the knot $K$, and set $M = Y \smallsetminus K$.  If $K$ is non-trivial, then no two slopes $r$ and $r'$ of the same sign yield orientation-preserving homeomorphic manifolds $M(r)$ and $M(r')$.
\end{theorem}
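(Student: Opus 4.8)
The plan is to fix an orientation-preserving homeomorphism $M(r)\cong M(r')$ with $r\ne r'$ of the same sign and derive a contradiction by comparing invariants that such a homeomorphism preserves: the order of $H_1$, the linking form, the Casson--Walker invariant, and the Heegaard Floer data ($\rk\HFhat$ and the correction terms). First I would normalise. Writing $M=Y\smallsetminus K$, we have $|H_1(M(p/q))|=|p|$, so $M(r)\cong M(r')$ forces $|p|=|p'|$; since $r=p/q$ and $r'=p'/q'$ have the same sign this gives $p=p'$, and, after possibly reversing all orientations, the problem reduces to showing $q=q'$ for two positive slopes $p/q$ and $p/q'$ with $\gcd(p,q)=\gcd(p,q')=1$. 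The linking form of $M(p/q)$ is presented by $q/p$ on $\bZ/p\bZ$, so the homeomorphism already pins down $q$ and $q'$ modulo $p$ up to multiplication by a square.

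The engine of the argument is the Ozsv\'ath--Szab\'o rational surgery formula, which recovers $HF^+(M(p/q))$, and in particular its correction terms, from the knot Floer complex $CFK^{\infty}(K)$. Here the hypothesis that $Y$ is an integer homology sphere L-space is essential: it trivialises the ambient contribution to the mapping cone, so the computation proceeds exactly as for a knot in $S^3$ and is governed by the non-increasing sequence of integers $V_0\ge V_1\ge\cdots\ge 0$ through a formula of the shape $d(M(p/q),i)=d_U(p/q,i)-2\max\{V_{\lfloor i/q\rfloor},V_{\lceil (p-i)/q\rceil}\}$, the term $d_U$ being the lens-space contribution of the unknot. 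Alongside this I would record the Casson--Walker surgery formula $\lambda(M(p/q))=\tfrac{q}{p}\cdot\tfrac12\Delta_K''(1)+c(q\bmod p)$, whose second summand depends only on $q$ modulo $p$, while the first grows linearly in $q$.

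The heart of the matter is to show that, for a nontrivial $K$, at least one invariant depends on $q$ itself and not merely on $q\bmod p$. I would split on $\Delta_K''(1)$. If $\Delta_K''(1)=0$, then $K$ cannot be an L-space knot (a nontrivial L-space knot has a positive torsion coefficient, hence $\Delta_K''(1)>0$), so $K$ has nontrivial reduced knot Floer homology; the truncated mapping cone then acquires on the order of $q$ extra generators and $\rk\HFhat(M(p/q))$ is a strictly increasing function of $q$. Since this is a single number depending only on $q$ (not on its residue), it forces $q=q'$ outright. If instead $\Delta_K''(1)\ne 0$, the linear-in-$q$ term of the Casson--Walker invariant is strictly monotone; combining this with the mod-$p$ constraints coming from the linking form and from the correction-term data to exclude accidental cancellation by the bounded, $q\bmod p$-periodic lens term, one again concludes $q=q'$.

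I expect the two strict-monotonicity statements to be the main obstacle, together with the reconciliation in the $\Delta_K''(1)\ne0$ case. Concretely, one must extract from the truncated mapping cone the precise $q$-dependence of $\rk\HFhat$ (respectively of the correction-term sum), and then rule out that the unbounded knot contribution to $\lambda$ is cancelled by the bounded lens contribution after the residue of $q$ modulo $p$ has been constrained. This is exactly the step in which the same-sign hypothesis is used, keeping every $q$-dependent term of one definite sign, and in which the nontriviality of $K$ and the L-space condition on $Y$ enter essentially.
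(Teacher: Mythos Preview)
This theorem is not proved in the paper: it is quoted as \cite[Theorem 1.3]{ZWu2009} and used as a black box (in the proof of Lemma~\ref{lem:smallsurgeryLspace}). There is therefore no proof in the present paper to compare your proposal against.

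On the merits of your sketch: the overall strategy---compare $|H_1|$, the rational surgery formula for $\rk\HFhat$, and the Casson--Walker invariant---is indeed the circle of ideas in Wu's actual argument. But what you have written is a plan, not a proof, and you say so yourself when you flag the monotonicity statements and the cancellation problem as ``the main obstacle''. Two specific issues: first, your dichotomy on $\Delta_K''(1)$ is not the natural one. The rational surgery formula (quoted verbatim later in this paper, in the proof of Lemma~\ref{lem:largesurgeryLspace}) already separates the $q$-dependence into the term $2\max\{0,(2\nu_Y(K)-1)q-p\}$ and the term $q\sum_s(\rk H_*(A_s)-1)$; the honest split is on whether $\nu_Y(K)>0$ and whether $\sum_s(\rk H_*(A_s)-1)>0$, and the nontriviality of $K$ must be used to rule out the case where both vanish. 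Your claim that $\Delta_K''(1)=0$ forces $K$ not to be an L-space knot, hence forces some $A_s$ to have rank $>1$, is plausible for $S^3$ but needs justification for an arbitrary L-space integer homology sphere. Second, in your $\Delta_K''(1)\neq 0$ branch you invoke the linking form and correction terms to ``exclude accidental cancellation'' but give no mechanism; this is exactly where the work lies, and Wu's argument handles it differently (via an explicit comparison of the $q$-linear Casson--Walker term against the bounded Dedekind-sum contribution once the rank formula has constrained the situation).
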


\begin{lemma}\label{lem:smallsurgeryLspace} Let $K$ be a non-trivial knot in $Y=S^3$ or $\Sigma(2,3,5)$ with Seifert fibred exterior.  Then there are at most finitely many slopes of the form $\frac{1}{n}$ (with $n \in \mathbb{Z}$) such that $\frac{1}{n}$-surgery on $K$ yields an L-space.
\end{lemma}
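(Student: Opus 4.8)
The plan is to show that any $\frac{1}{n}$-surgery producing an L-space must produce one of only finitely many \emph{oriented} manifolds, and then to invoke Wu's cosmetic surgery obstruction to rule out all but finitely many of the slopes. First I would record the basic structure of the fillings. Since $K$ is a knot in an integer homology sphere, the exterior $M = Y \smallsetminus K$ satisfies $H_1(M;\mathbb{Z}) \cong \mathbb{Z}$, and by hypothesis $M$ is Seifert fibred. By the work of Heil quoted above, every $\frac{1}{n}$-filling $M(\frac{1}{n})$ is then Seifert fibred and prime; moreover $H_1(M(\frac{1}{n});\mathbb{Z}) \cong \mathbb{Z}/1\mathbb{Z} = 0$ since $p = 1$, so each $M(\frac{1}{n})$ is a Seifert fibred integer homology sphere.

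Next I would identify the possible L-space fillings. Suppose $\frac{1}{n}$-surgery yields an L-space. Then $M(\frac{1}{n})$ is a Seifert fibred integer homology sphere L-space, so by Theorem \ref{thm:BGWlspace} its fundamental group is not left-orderable, and hence by Theorem \ref{thm:BRWlospheres} it is homeomorphic to either $S^3$ or $\Sigma(2,3,5)$. In particular, up to orientation-preserving homeomorphism, $M(\frac{1}{n})$ belongs to the finite list $\{S^3, \Sigma(2,3,5), -\Sigma(2,3,5)\}$; here $S^3$ admits an orientation-reversing self-homeomorphism, whereas the two orientations of the Poincar\'e sphere are a priori distinct.

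The final step is a pigeonhole argument against Theorem \ref{thm:Wu-cosmetic}. Suppose, for contradiction, that infinitely many integers $n$ give L-space fillings. Since only finitely many oriented homeomorphism types are available, infinitely many of these fillings are mutually orientation-preserving homeomorphic; and since the slope $\frac{1}{n}$ is positive exactly when $n > 0$, among these infinitely many we may further select infinitely many (in particular, at least two) sharing a common sign. Thus there exist distinct slopes $\frac{1}{n_1}$ and $\frac{1}{n_2}$ of the same sign with $M(\frac{1}{n_1})$ and $M(\frac{1}{n_2})$ orientation-preserving homeomorphic. As $Y$ is an integer homology sphere L-space and $K$ is non-trivial, this directly contradicts Theorem \ref{thm:Wu-cosmetic}, and the lemma follows.

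I expect the only delicate point to be the bookkeeping of orientations: Wu's theorem requires an orientation-preserving homeomorphism between same-sign fillings, so one must pass to a single oriented homeomorphism type before extracting the contradiction. This is precisely why the double application of pigeonhole is needed, exploiting the orientation-reversing symmetry of $S^3$ to collapse its two orientations and treating $\pm\Sigma(2,3,5)$ as separate classes.
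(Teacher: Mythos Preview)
Your proof is correct and follows essentially the same route as the paper's: identify the $\frac{1}{n}$-fillings as Seifert fibred integer homology spheres, classify the L-space ones as $S^3$ or $\Sigma(2,3,5)$, and then pigeonhole against Wu's cosmetic surgery theorem. The only cosmetic difference is that the paper cites Eftekhary's classification of Seifert fibred integer homology sphere L-spaces directly, whereas you obtain it via Theorems~\ref{thm:BGWlspace} and~\ref{thm:BRWlospheres}; the paper itself notes these are equivalent, and your explicit double-pigeonhole on oriented types and signs is just a more careful unpacking of the same step.
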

\begin{proof}
Setting $M=Y\smallsetminus K$, each $M(\frac{1}{n})$ is a Seifert fibred space.  Any L-space that results from $\frac{1}{n}$-surgery on $K$ is therefore homeomorphic to either $S^3$ or $\Sigma(2,3,5)$ \cite[Theorem 1.1]{Eftekhary2009}.  Note that our lemma is now a question of cosmetic surgeries: how many surgeries on $K$ yield homeomorphic manifolds?

Suppose there were infinitely many slopes.  By the pigeonhole principle, there would have to be at least two slopes of the same sign that yield orientation-preserving homeomorphic manifolds. Since $Y$ is an integer homology sphere L-space by assumption, we may apply Theorem \ref{thm:Wu-cosmetic} to arrive at a contradiction.
\end{proof}

This yields an immediate corollary.
\begin{corollary}\label{cor:smallsurgeryLO} Let $K$ be a non-trivial knot in $Y=S^3$ or $\Sigma(2,3,5)$ such that the complement $Y \smallsetminus K$ is Seifert fibred.  At most finitely many of the slopes $\frac{1}{n}$ are not left-orderable.
\end{corollary}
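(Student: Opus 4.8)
The plan is to deduce this directly from Lemma \ref{lem:smallsurgeryLspace} by converting the statement about L-spaces into one about left-orderability. First I would record that, since the complement $M = Y \smallsetminus K$ is Seifert fibred, every $\frac{1}{n}$-filling $M(\frac{1}{n})$ is again a closed, connected, orientable Seifert fibred space. This is precisely the consequence of Heil's and Scott's work singled out immediately after the proof of Lemma \ref{lem:irreducible}: both the $0$-filling and every $\frac{1}{n}$-filling on a Seifert fibred manifold with first homology $\bZ$ is Seifert fibred. In particular each $M(\frac{1}{n})$ satisfies the hypotheses of Theorem \ref{thm:BGWlspace}.

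The key step is then to invoke the Boyer-Gordon-Watson dichotomy (Theorem \ref{thm:BGWlspace}), which states that a closed, connected, orientable Seifert fibred $3$-manifold is an L-space if and only if its fundamental group is not left-orderable. Applied to $M(\frac{1}{n})$, this says that $\pi_1(M(\frac{1}{n}))$ is left-orderable exactly when $M(\frac{1}{n})$ is \emph{not} an L-space. By Definition \ref{def:LOslope}, the slope $\frac{1}{n}$ is left-orderable precisely when $\pi_1(M(\frac{1}{n}))$ is, so the slopes of the form $\frac{1}{n}$ that fail to be left-orderable are exactly those for which $M(\frac{1}{n})$ is an L-space.

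Finally, Lemma \ref{lem:smallsurgeryLspace} guarantees that at most finitely many slopes of the form $\frac{1}{n}$ yield an L-space, and combining this with the identification above shows that at most finitely many $\frac{1}{n}$ are not left-orderable. I do not expect a genuine obstacle: the substantive content is entirely absorbed into Lemma \ref{lem:smallsurgeryLspace} (via Wu's cosmetic surgery result) and into the Seifert fibred characterization of left-orderability. The only point requiring a moment's attention is verifying that every $\frac{1}{n}$-filling remains Seifert fibred, so that Theorem \ref{thm:BGWlspace} is applicable, and this is exactly the Heil-Scott input isolated above.
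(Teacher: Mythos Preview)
Your proposal is correct and follows essentially the same argument as the paper: invoke Lemma~\ref{lem:smallsurgeryLspace} to bound the number of L-space fillings, note that each $M(\frac{1}{n})$ is Seifert fibred, and apply Theorem~\ref{thm:BGWlspace} to translate this into a bound on non-left-orderable slopes. You are slightly more explicit than the paper in justifying the Seifert fibred property of the fillings via the Heil--Scott input, but this is exactly the fact the paper is implicitly using.
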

\begin{proof}
By Lemma \ref{lem:smallsurgeryLspace}, at most finitely many of the manifolds $M(\frac{1}{n})$ are L-spaces.  By Theorem \ref{thm:BGWlspace}, since each $M(\frac{1}{n})$ is Seifert fibred, we conclude that at most finitely many of the slopes $\frac{1}{n}$ are not left-orderable.\end{proof}

\begin{lemma}\label{lem:largesurgeryLspace} Let $K$ be a non-trivial knot in $Y=S^3$ or $\Sigma(2,3,5)$  with Seifert fibred exterior.  For either all positive or all negative integers $n$, the result of $n$-surgery on $K$ is not an L-space.
\end{lemma}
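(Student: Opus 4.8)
The plan is to recast the disjunction as a single rigidity statement and then feed it into the knot Floer surgery formula. First I would observe that the asserted conclusion---(for all $n>0$, $M(n)$ is not an L-space) or (for all $n<0$, $M(n)$ is not an L-space)---is logically equivalent to ruling out that $K$ carries L-space surgeries of \emph{both} signs: the conclusion fails precisely when there exist $n_+>0$ and $n_-<0$ with both $M(n_+)$ and $M(n_-)$ L-spaces. Thus it suffices to show that a non-trivial $K$ in the integer homology sphere L-space $Y\in\{S^3,\Sigma(2,3,5)\}$ cannot admit integral L-space surgeries of opposite signs. (Both candidate ambient manifolds are L-spaces, by \cite{Eftekhary2009} or directly.)

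For the positive side I would invoke the structure theory of L-space knots coming from the Ozsv\'ath--Szab\'o mapping-cone surgery formula. The key input is that a single L-space surgery of positive slope on a null-homologous knot in an integer homology sphere L-space forces each large-surgery complex $\hat A_s$ to have rank-one homology---that is, $K$ is an L-space knot---and consequently $\tau(K)=g(K)$, where $g$ denotes the Seifert genus of $K$ in $Y$. Since $K$ is non-trivial we have $g(K)\ge 1$, so a positive L-space surgery yields $\tau_Y(K)=g(K)>0$.

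The negative side I would handle by orientation reversal. If $M(n_-)$ is an L-space with $n_-<0$, then reversing orientation identifies it with $(-Y)_{|n_-|}(\bar K)$, a \emph{positive} surgery on the mirror knot $\bar K$ in $-Y$; here $-Y$ is again an integer homology sphere L-space (the L-space condition and $|H_1|$ are insensitive to orientation) and $g(\bar K)=g(K)$. Applying the positive-side conclusion in $-Y$ gives $\tau_{-Y}(\bar K)=g(K)>0$, while the behaviour of $\tau$ under orientation reversal and mirroring gives $\tau_{-Y}(\bar K)=-\tau_Y(K)$. Hence $\tau_Y(K)=-g(K)<0$, contradicting $\tau_Y(K)>0$. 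This rules out opposite-sign L-space surgeries and establishes the lemma.

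The hard part is not the logic but the transfer of the L-space-knot rigidity from $S^3$ to $\Sigma(2,3,5)$ (and $-\Sigma(2,3,5)$): I must check that ``positive L-space surgery $\Rightarrow \tau=g$'', the adjunction bound $\tau\le g$, and the sign-reversal $\tau_{-Y}(\bar K)=-\tau_Y(K)$ all hold for null-homologous knots in an integer homology sphere L-space, where the nonzero correction term of $\Sigma(2,3,5)$ and the spin$^c$/grading bookkeeping in the rational surgery formula require care. Note that the Seifert-fibred hypothesis on the exterior plays no role in this argument itself; it is what allows the companion results to convert ``$M(n)$ is not an L-space'' into ``$M(n)$ has left-orderable fundamental group'' via Theorem \ref{thm:BGWlspace}, and so I would keep the Heegaard Floer input as the engine and treat the Seifert condition as ambient.
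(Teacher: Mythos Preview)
Your overall strategy---assume integral L-space surgeries of both signs and derive a contradiction via the mapping-cone surgery formula---matches the paper's. Both arguments begin by observing that a positive integral L-space filling forces $\rk H_*(A_s)=1$ for all $s$, and both use an orientation reversal at a key moment. The divergence is in how the degenerate case is handled.

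The paper works with $\nu_Y(K)$ rather than $\tau$. After arranging $\nu_Y(K)\ge 0$ (via orientation reversal on $Y$), it splits into two cases. When $\nu_Y(K)>0$, Wu's formula shows directly that no negative integral surgery is an L-space, giving the contradiction. When $\nu_Y(K)=0$, the formula shows that \emph{every} nonzero surgery is an L-space; the paper then invokes Lemma~\ref{lem:smallsurgeryLspace}---the companion result on $\frac{1}{n}$-fillings, which relies on the Seifert-fibred exterior via Eftekhary's classification and Wu's cosmetic-surgery theorem---to reach a contradiction. So the Seifert hypothesis is genuinely used in the paper's proof, contrary to your closing remark.

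Your route tries to bypass this case split by asserting that a positive L-space surgery on a nontrivial $K$ in an L-space integer homology sphere forces $\tau(K)=g(K)\ge 1$, hence in effect $\nu_Y(K)>0$ automatically. If that implication holds in $\Sigma(2,3,5)$ (and $-\Sigma(2,3,5)$), your argument is cleaner and indeed independent of the Seifert hypothesis. But this is exactly the step you flag as unverified, and at the level of the surgery formula alone it does not follow: $\rk H_*(A_s)=1$ for all $s$ is a priori compatible with $\nu_Y(K)=0$. In $S^3$ one closes this via the staircase structure of $CFK^\infty$ for L-space knots together with genus detection; porting that to $\Sigma(2,3,5)$ requires the correction-term and grading bookkeeping you allude to but do not supply. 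Until that is carried out, the $\nu_Y(K)=0$ case remains open in your proof, and the paper's appeal to the Seifert hypothesis via Lemma~\ref{lem:smallsurgeryLspace} is precisely what fills that gap.
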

\begin{proof}
Arguing by contradiction, assume there exist both positive and negative integral L-space fillings of $K$. Let $M=Y\smallsetminus K$. 

If $Y$ is an L-space integer homology sphere, we may apply \cite[Proposition 3.5]{ZWu2009}, a generalization of \cite[Proposition 9.5]{OSz-rational}, which gives a formula for the total rank of the Heegaard Floer homology of $\frac{p}{q}$-surgery on $K$.  Without loss of generality, we assume $q>0$.  
The rational surgery formula reads
\[
\rk \widehat{HF}(M(\textstyle{\frac{p}{q}})) = p + 2\max\{0,(2\nu_Y(K)-1)q-p\} + q\sum_{s \in \mathbb{Z}} (\rk H_*(A_s) - 1).
\]
The formula depends on the invariant $\nu_Y(K)$ being nonnegative.  While we have not defined this invariant, we will only need the following property: after possibly reversing the orientation of $Y$ and maintaining the original orientation on $K$, we can make $\nu_Y(K)$ to be nonnegative.  Therefore, we assume that $\nu_Y(K) \geq 0$ for the remainder of the proof.  Note that the pair $(Y,K)$ still admits positive and negative integral L-space fillings, as being an L-space is independent of orientation \cite[Proposition 2.5]{OSz2004-properties}.

We also do not define the chain-complexes $A_s$, but point out that $H_*(A_s)$ is isomorphic to the Heegaard Floer homology of large positive surgeries on $K$ in certain Spin$^c$ structures. In particular, $\rk H_*(A_s) \geq 1$ for all $s$.  By assumption, since there are positive, integral L-space fillings on $K$, we must have that $H_*(A_s)$ has rank 1 for all $s$. 

This formula leads to two natural cases.  First, suppose that $\nu_Y(K) = 0$, so that the surgery formula is given by
\[
\rk \widehat{HF}(M(\textstyle{\frac{p}{q}})) = |p| + q\sum_{s \in \mathbb{Z}} (\rk H_*(A_s) - 1).
\]
By this formula, $M(\frac{p}{q})$ must in fact be an L-space for any non-zero $\frac{p}{q}$.  However, this is a contradiction to Lemma~\ref{lem:smallsurgeryLspace}, which tells us that there exist $\frac{1}{n}$-fillings on $K$ which do not give L-spaces. As such, we conclude that $\nu_Y(K) > 0$.

In this case, the formula simplifies to 
\[\rk \widehat{HF}(M(\textstyle{\frac{p}{q}})) = p + 2\max\{0,(2\nu_Y(K)-1)q-p\}. \]
This shows that for $p < 0$,
\[
\rk \widehat{HF}(M(\textstyle{\frac{p}{1}})) = p + 2(2\nu_Y(K)-1-p) > |p|.
\]
Therefore, no negative integral surgeries can be L-spaces and we have found our contradiction.
\end{proof}

Again, we have the corresponding statement for left-orderability.

\begin{corollary}\label{cor:largesurgeryLO} Let $K$ be a nontrivial knot in $Y=S^3$ or $\Sigma(2,3,5)$ such that the complement is Seifert fibred.  Then for either infinitely many positive integers or infinitely many negative integers $n$, $\frac{n}{1}$ is a left-orderable slope.
\end{corollary}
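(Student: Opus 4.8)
The plan is to read off this corollary directly from Lemma~\ref{lem:largesurgeryLspace}, converting the statement about L-spaces into one about left-orderability by means of Theorem~\ref{thm:BGWlspace}. Throughout, set $M = Y \smallsetminus K$, so that $\frac{n}{1}$-surgery on $K$ is precisely the Dehn filling $M(\frac{n}{1})$, and recall from $H_1(M(\frac{p}{q}))\cong \bZ/|p|\bZ$ that each such filling (for $n\ne 0$) is a rational homology sphere, so that the notion of an L-space applies.

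First I would invoke the recorded consequence of Heil's work: since $M$ is Seifert fibred with $H_1(M;\bZ)\cong\bZ$, all but finitely many of the $\frac{n}{1}$-fillings of $M$ are again Seifert fibred (indeed prime). Set aside this finite exceptional collection. Next I would apply Lemma~\ref{lem:largesurgeryLspace}, which asserts that for either all positive or all negative integers $n$ the manifold $M(\frac{n}{1})$ fails to be an L-space; without loss of generality assume this occurs for all positive $n$. Intersecting with the previous step leaves infinitely many positive integers $n$ for which $M(\frac{n}{1})$ is simultaneously Seifert fibred and not an L-space.

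Finally, for each such $n$, Theorem~\ref{thm:BGWlspace} applies: a closed, connected, orientable Seifert fibred rational homology sphere that is not an L-space has left-orderable fundamental group. Hence $\frac{n}{1}$ is a left-orderable slope for infinitely many positive integers $n$ (or, in the other branch of the dichotomy, for infinitely many negative $n$), which is exactly the claim.

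There is no genuine obstacle here, since the substantive Heegaard Floer input has already been isolated in Lemma~\ref{lem:largesurgeryLspace}; the corollary is a formal consequence. The only point meriting a moment's attention is that excising the finitely many non-Seifert-fibred fillings does not weaken the conclusion, which is immediate because removing a finite set from an infinite set of integers of a fixed sign leaves an infinite set. Everything else is bookkeeping, matching the sign dichotomy of Lemma~\ref{lem:largesurgeryLspace} with the ``infinitely many positive or infinitely many negative'' phrasing of the statement.
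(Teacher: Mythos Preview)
Your proposal is correct and follows essentially the same approach as the paper's proof, which is a two-line argument: ``The result of $\pm n$-surgery on $K$ is generically Seifert fibred. Now we combine Lemma~\ref{lem:largesurgeryLspace} and Theorem~\ref{thm:BGWlspace}.'' You have simply unpacked these two sentences, making explicit the appeal to Heil's work for the generic Seifert fibred claim and spelling out why discarding the finite exceptional set is harmless.
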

\begin{proof}
The result of $\pm n$-surgery on $K$ is generically Seifert fibred.  Now we combine Lemma \ref{lem:largesurgeryLspace} and Theorem~\ref{thm:BGWlspace}.
\end{proof}

\subsection{A generalization and proof of Theorem \ref{thm:main}}

We now have the material in place to prove Theorem \ref{thm:main}, which is an immediate consequence of the following:

\begin{theorem}\label{thm:general-main}Let $Y$ be a graph manifold other than $S^3$, none of whose prime components is a Poincar\'e homology sphere.  If Y is an integer homology sphere, then $\pi_1(Y)$ is left-orderable.
\end{theorem}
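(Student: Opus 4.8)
The plan is to reduce to the irreducible case and then induct on the number of tori in the JSJ decomposition. First I would dispose of reducibility: writing the prime decomposition $Y = Y_1 \# \cdots \# Y_m$, each $Y_j$ is again an integer homology sphere graph manifold, none equal to $\Sigma(2,3,5)$ by hypothesis, and at least one is nontrivial since $Y \neq S^3$. Because $\pi_1(Y)$ is the free product of the groups $\pi_1(Y_j)$, and a free product of left-orderable groups is left-orderable (this also follows from Theorem \ref{thm:amalgam} with trivial amalgamating subgroups), it is enough to prove the theorem for irreducible $Y$. For the induction I would use the number $k = |\sT|$ of tori in the minimal collection of Theorem \ref{thm:JSJ}. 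The base case $k = 0$ is precisely a Seifert fibred integer homology sphere other than $S^3$ and $\Sigma(2,3,5)$, and Theorem \ref{thm:BRWlospheres} gives left-orderability at once.

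For the inductive step I would use that $\Gamma_Y$ is a tree (forced by $H_1(Y;\bZ) = 0$) to select a leaf, writing $Y = M_1 \cup_\phi M_2$ with $M_2$ a single Seifert fibred piece and $M_1$ decomposed by $k-1$ tori. Lemma \ref{lem:toroidal-ZHS} presents this as a splicing, with $\phi(\mu_1) = \lambda_2$, $\phi(\lambda_1) = \mu_2$, and $Y_i = M_i(\mu_i)$ integer homology spheres; by Lemmas \ref{lem:inductionstep} and \ref{lem:irreducible}, $Y_1 = M_1(\mu_1)$ is an irreducible integer homology sphere graph manifold decomposed by at most $k-1$ tori, to which the inductive hypothesis applies. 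If $Y_1$ is neither $S^3$ nor $\Sigma(2,3,5)$, then $\pi_1(Y_1)$ is left-orderable, so $\mu_1$ is a left-orderable slope; as $\lambda_2 = \phi_*(\mu_1)$ fills $M_2$ to the prime Seifert fibred manifold $M_2(\lambda_2)$ of positive first Betti number, $\lambda_2$ is left-orderable too, and Theorem \ref{thm:JSJLO} completes this easy case.

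The substantive case is $Y_1 \in \{S^3, \Sigma(2,3,5)\}$, where $\mu_1$ is not left-orderable and I must perturb. I would run over the slopes $\tfrac1n = \mu_1 + n\lambda_1$ on $M_1$, whose images $\phi_*(\tfrac1n) = \tfrac n1$ are the integral surgeries on the core $K_2$ of the Seifert piece $M_2$. Each $M_1(\tfrac1n)$ is an integer homology sphere graph manifold with fewer than $k$ tori, so by induction it is left-orderable off a finite exceptional set of $n$: if $M_1$ is toroidal an essential torus persists in $M_1(\tfrac1n)$ for all but finitely many $n$, precluding the atoroidal $S^3$ and $\Sigma(2,3,5)$, while if $M_1$ is Seifert fibred then $K_1 \subset Y_1$ has Seifert fibred exterior and Corollary \ref{cor:smallsurgeryLO} bounds the failures. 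On the other side $M_2(\tfrac n1)$ is Seifert fibred for all but finitely many $n$, and Corollary \ref{cor:largesurgeryLO}, applied to $K_2 \subset Y_2$ when $Y_2 \in \{S^3, \Sigma(2,3,5)\}$, produces infinitely many $n$ of a fixed sign along which $\tfrac n1$ is left-orderable; when instead $Y_2$ is not exceptional, $\pi_1(Y_2)$ is already left-orderable and I would instead pair the left-orderable slope $\mu_2$ with $\lambda_1$. Either way I can select a single slope $\alpha$ on $M_1$ with $\alpha$ and $\phi_*(\alpha)$ both left-orderable, and Theorem \ref{thm:JSJLO} yields that $\pi_1(Y)$ is left-orderable.

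The main obstacle is exactly this last configuration, in which the two distinguished fillings $Y_1$ and $Y_2$ are both driven into the non-left-orderable exceptions $S^3$ and $\Sigma(2,3,5)$; the Heegaard Floer input packaged in Corollaries \ref{cor:smallsurgeryLO} and \ref{cor:largesurgeryLO} is precisely what rescues the argument, guaranteeing that although the canonical slopes $\mu_i$ fail, left-orderability returns along infinitely many nearby surgery slopes, so that a pigeonhole comparison of a cofinite good set against an infinite good set locates a simultaneously left-orderable pair. The remaining care lies in the bookkeeping: confirming that $M_1(\tfrac1n)$ is irreducible as in Lemma \ref{lem:irreducible} (and, where it is reducible, that no prime summand is $\Sigma(2,3,5)$), that the exceptional $n$ on the $M_1$-side are genuinely finite, and that the longitudinal filling $M_1(\lambda_1)$ used in the non-exceptional $Y_2$ branch is prime and hence left-orderable by Theorem \ref{thm:loimage}.
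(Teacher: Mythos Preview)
Your proposal is correct and follows essentially the same approach as the paper: reduce to the irreducible case via free products, induct on $|\sT|$, cut at a leaf so that $M_2$ is Seifert fibred, and in the hard case where $Y_1,Y_2\in\{S^3,\Sigma(2,3,5)\}$ match an infinite family of left-orderable $\tfrac{n}{1}$-slopes on $M_2$ (Corollary~\ref{cor:largesurgeryLO}) against a cofinite family of left-orderable $\tfrac{1}{n}$-slopes on $M_1$ obtained from the inductive hypothesis (or Corollary~\ref{cor:smallsurgeryLO} when $M_1$ is itself Seifert). The only organizational difference is that the paper isolates the case $|\sT|=1$ as a separate base before the general step, whereas you absorb it into the inductive step via your split on whether $M_1$ is Seifert fibred or toroidal; both are fine, and your explicit remark that an essential torus of $M_1$ persists in $M_1(\tfrac{1}{n})$ for all but finitely many $n$ is exactly what the paper invokes implicitly when it says these fillings ``generically'' have nontrivial JSJ decomposition.
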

\begin{proof}
First, observe that if $Y = Y_1 \# \cdots \# Y_m$, then $\pi_1(Y)$ is a free product of the groups $\pi_1(Y_i)$.  It is well known that if $G$ is a free product of the groups $G_i$, then $G$ is left-orderable if and only if each nontrivial $G_i$ is left-orderable \cite{Vino49}.  Therefore, $\pi_1(Y)$ will be left-orderable if and only if each nontrivial $\pi_1(Y_i)$ is left-orderable.  Hence, for the remainder of the proof we may assume that $Y$ is prime.

We now proceed by induction on the number of tori in the family $\sT$ provided by the JSJ decomposition of $Y$, as given by Theorem \ref{thm:JSJ}.  First, if $\sT = \emptyset$, then $Y$ is Seifert fibred.  Since $Y$ is not $S^3$ or $\Sigma(2,3,5)$ by assumption, Theorem~\ref{thm:BRWlospheres} ensures that $\pi_1(Y)$ is left-orderable.

The first case of interest is when $|\sT | = 1$.  In particular, $Y$ is a splicing of $Y_1$ and $Y_2$ along knots $K_1$ and $K_2$, as in Lemma \ref{lem:toroidal-ZHS}, such that each knot complement $M_i = Y_i \smallsetminus K_i$ is a boundary-irreducible Seifert fibred space.  We denote the gluing map  by $\phi$, so that $Y = M_1 \cup_{\phi} M_2$.  Observe that $Y_1$ and $Y_2$ must be Seifert fibred spaces as well, since they are integer homology spheres arising from Dehn filling the Seifert fibred manifolds $M_1$ and $M_2$ respectively.

Suppose first that one of the $Y_i$ has a left-orderable fundamental group. Without loss of generality, $\pi_1(Y_1)$ is left-orderable and so the meridian of $K_1$ is a left-orderable slope on $\partial M_1$.  Furthermore, $M_2(\lambda_2)$ is prime since $M_2$ is Seifert fibred.  By Corollary~\ref{cor:splice}, $\pi_1(Y)$ must be left-orderable as well.

Therefore, we can assume $\pi_1(Y_1)$ and $\pi_1(Y_2)$ are both non-left-orderable groups, so that each $Y_i$ is one of $S^3$ or $\Sigma(2,3,5)$.    By Corollary~\ref{cor:largesurgeryLO}, either infinitely many positive or negative integer slopes on $\partial M_1$ are left-orderable;  by Corollary \ref{cor:smallsurgeryLO} only finitely many slopes on $\partial M_2$ of the form $\frac{1}{n}$ are not left-orderable.   Recall that since the homeomorphism $\phi$ specifies a splicing of knots, relative to our chosen bases of the periphal subgroups $\phi$ sends the slope $\frac{p}{q}$ on $\partial M_1$ to the slope $\frac{q}{p}$ on $\partial M_2$.  Therefore, we can find a pair of left-orderable slopes of the form $\frac{n}{1}$ on $M_1$ and $\frac{1}{n}$ on $M_2$ that are identified by $\phi$.  Theorem~\ref{thm:JSJLO} completes the proof in this setting, and provides a base case for induction.

We now provide the induction step.  Assume that every irreducible integer homology sphere graph manifold whose JSJ decomposition has fewer than $p$ tori (other than $S^3$ or $\Sigma(2,3,5)$) has left-orderable fundamental group.  Suppose that $Y$ has tori $T_1,\ldots,T_p$ in the JSJ decomposition, where $p \geq 2$.  Since $\Gamma_Y$ is a tree, $T_p$ is a separating torus and $Y \smallsetminus T_p$ consists of the two manifolds $M_i = Y_i \smallsetminus K_i$, each containing $p-1$ or fewer tori in its JSJ decomposition.  Without loss of generality, we can choose $T_p$ such that $M_2$ is Seifert fibred.  By Lemma \ref{lem:inductionstep}, the number of tori in the JSJ decomposition of $Y_1 = M_1(\mu_1)$ is less than or equal to $p-1$. $Y_1$ must be irreducible by Lemma~\ref{lem:irreducible}.

Again, there are two cases to consider. First suppose that $Y_1$ (the manifold which may contain incompressible tori) has $\pi_1(Y_1)$ left-orderable.  Since $M_2(\lambda_2)$ is prime, we may apply Corollary~\ref{cor:splice} to see that $\pi_1(Y)$ is left-orderable. On the other hand,  if $\pi_1(Y_1)$ is not left-orderable, by induction, $Y_1$ must be $S^3$ or $\Sigma(2,3,5)$.  Since $Y_2$ is Seifert fibred, notice that we are once again considering a splicing of knots in Seifert fibred spaces. As above, the case wherein $\pi_1(Y_2)$ is left-orderable is handled by Corollary \ref{cor:splice}; if $\pi_1(Y_2)$ is not left-orderable then there exist left-orderable slopes $\frac{1}{n}$ and $\frac{n}{1}$ in $\partial M_1$ and $\partial M_2$ respectively and an application of Theorem \ref{thm:JSJLO} concludes the proof. We remark that this is not, strictly speaking, a second application of the case $|\sT|=1$: since the manifolds $M_1(\frac{1}{n})$ are generically integer homology sphere graph manifolds with non-trivial JSJ decomposition, this final step makes use of the induction hypothesis. \end{proof} 


\section{Examples, consequences and generalizations}\label{sec:examples}

\subsection{On a question of D\c{a}bkowski, Przytycki and  Togha} As an immediate consequence of Theorem \ref{thm:main}, we provide a negative answer to \cite[Problem 1(iii)]{DPT2005}.

\begin{proposition}
There exists a hyperbolic knot $K$ in $S^3$ with $n$-fold cyclic-branched cover $\Sigma_n(K)$ that has left-orderable fundamental group and finite $H_1(\Sigma_n(K);\bZ)$ (for infinitely many positive integers $n$).
\end{proposition}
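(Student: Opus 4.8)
The plan is to produce an explicit family of hyperbolic knots whose cyclic branched covers are integer homology spheres that fall under the hypotheses of Theorem~\ref{thm:main}. The strategy rests on a standard principle: for a knot $K \subset S^3$, the $n$-fold cyclic branched cover $\Sigma_n(K)$ has $H_1(\Sigma_n(K);\bZ)$ controlled by the Alexander polynomial $\Delta_K(t)$, and in fact $\Sigma_n(K)$ is an integer homology sphere precisely when $\Delta_K(t)$ has no roots that are $n$-th roots of unity. So first I would seek a knot $K$ whose Alexander polynomial has this divisibility property for infinitely many $n$, while simultaneously arranging that $\Sigma_n(K)$ is an irreducible toroidal graph manifold. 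To meet both demands at once, the natural source is a \emph{satellite} or, more robustly, a knot built so that its branched covers decompose along tori into Seifert fibred pieces.

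\textbf{Step 1 (choice of knot).} I would take $K$ to be a suitable hyperbolic knot whose branched covers are known to be graph manifolds; a clean way to guarantee this is to exploit the relationship between branched covers and Seifert fibred / graph-manifold structures for knots arising from Montesinos-type or periodic constructions. Concretely, one looks for a hyperbolic $K$ such that for infinitely many $n$ the cover $\Sigma_n(K)$ is toroidal (so $\sT \neq \emptyset$) with all JSJ pieces Seifert fibred.

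\textbf{Step 2 (homology condition).} For the chosen $K$, I would verify that $\Delta_K(t)$ shares no root with $t^n - 1$ for infinitely many $n$, so that $|H_1(\Sigma_n(K);\bZ)| = \prod_{\omega^n=1} |\Delta_K(\omega)|$ is finite and nonzero. If $K$ is chosen so that $\Sigma_n(K)$ is an \emph{integer} homology sphere for infinitely many $n$, this is immediate; otherwise finiteness of $H_1$ suffices for the weaker statement, and irreducibility together with the graph-manifold structure lets me invoke Theorem~\ref{thm:main}.

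\textbf{Step 3 (applying the main theorem).} Once $\Sigma_n(K)$ is known to be an irreducible toroidal integer homology sphere graph manifold, Theorem~\ref{thm:main} immediately yields that $\pi_1(\Sigma_n(K))$ is left-orderable, giving the desired conclusion for infinitely many positive $n$.

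\emph{The main obstacle} I anticipate is the simultaneous satisfaction of the three conditions: hyperbolicity of $K$ in $S^3$, the toroidal graph-manifold structure of $\Sigma_n(K)$, and the integer (or at least finite) homology condition. Hyperbolicity of $K$ pulls against the branched covers being graph manifolds, so the construction must be engineered carefully — most likely by starting from a concrete low-crossing example (the figure eight or a twist knot is a natural candidate given the acknowledgements, whose $\pm 4$-surgery is discussed, and whose branched covers are well documented) and checking these properties by direct computation rather than by a general mechanism. Verifying that infinitely many of the relevant $\Sigma_n(K)$ are genuinely toroidal (and not, say, Seifert fibred with non-left-orderable $\pi_1$, i.e.\ $\Sigma(2,3,5)$ or $S^3$) is the delicate point, and I would handle it by exhibiting the explicit JSJ tori coming from the periodicity of $K$ under the branched-covering action.
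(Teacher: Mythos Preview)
Your plan has a structural gap that is fatal as stated. You want to apply Theorem~\ref{thm:main} directly to $\Sigma_n(K)$ for infinitely many $n$, which requires each such $\Sigma_n(K)$ to be an irreducible toroidal \emph{graph manifold}. But you also require $K$ to be hyperbolic, and for a hyperbolic knot Thurston's orbifold theorem forces $\Sigma_n(K)$ to be hyperbolic for all sufficiently large $n$---the paper itself remarks on this immediately after the proof. So there can be at most finitely many $n$ for which $\Sigma_n(K)$ is a graph manifold, and Step~3 cannot produce infinitely many values of $n$. You anticipate the tension (``hyperbolicity of $K$ pulls against the branched covers being graph manifolds'') but do not recognise that it is decisive rather than merely delicate. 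There is a secondary issue: Theorem~\ref{thm:main} needs an \emph{integer} homology sphere, not just finite $H_1$, so your fallback in Step~2 does not actually license the invocation.

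The paper's argument sidesteps both problems with a single idea you are missing: apply Theorem~\ref{thm:main} only \emph{once}, to a single cover $\Sigma_{2}(K)$, and then propagate left-orderability to all even $n$ via the natural covering map $\Sigma_{n}(K)\to\Sigma_{2}(K)$ together with Theorem~\ref{thm:loimage}. Concretely, $K$ is taken to be the Conway knot: it has Alexander polynomial $1$ (so every $H_1(\Sigma_n(K);\bZ)$ is finite, indeed trivial for $n=2$), and by Montesinos $\Sigma_2(K)$ is obtained by surgery on a connected sum of two trefoils, hence is an irreducible toroidal graph manifold built from trefoil exteriors. Theorem~\ref{thm:main} then gives $\pi_1(\Sigma_2(K))$ left-orderable; primeness of $K$ gives irreducibility of $\Sigma_n(K)$; and the nontrivial homomorphism $\pi_1(\Sigma_n(K))\to\pi_1(\Sigma_2(K))$ for $2\mid n$ feeds into Theorem~\ref{thm:loimage}. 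Your candidate of the figure eight would not work even at the base step: its $2$-fold branched cover is the lens space $L(5,2)$, whose fundamental group is finite.
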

\begin{proof}
Let $K$ be the Conway knot (the knot $11n34$ in the standard knot tables; its mutant, the Kinoshita-Terasaka knot, is the knot $11n42$).  As $\det(K) = 1$, the two-fold cyclic branched cover of $K$ has trivial first-homology. Montesinos shows that $\Sigma_2(K)$ can be obtained by surgery on a connect-sum of two trefoil knots \cite{Montesinos75}.  This surgery results in an irreducible toroidal integer homology sphere $Y\cong M_1\cup M_2$;  each $M_i$ is homeomorphic to a trefoil exterior.  Since the trefoil is a torus knot, its complement is Seifert fibered.  Therefore, $\Sigma_2(K)$ is an irreducible toroidal graph manifold and $\pi_1(\Sigma_2(K))$ is left-orderable by  Theorem~\ref{thm:main}. We remark that this structure of $\Sigma_2(K)$ may be deduced by inspection of the branch set (see \cite{Montesinos75}, for example).

Now we can easily extend this example to obtain infinitely many $n$ such that $\Sigma_n(K)$ is left-orderable and $H_1(\Sigma_n(K);\bZ)$ finite.  A theorem of Fox states that $H_1(\Sigma_n(K);\bZ)$ will be infinite if and only if the Alexander polynomial for $K$ has zeros at $n^{\rm th}$ roots of unity \cite{Weber79}.  Since the Conway knot has Alexander polynomial 1, $H_1(\Sigma_n(K);\bZ)$ is always finite.

If $2|n$ then $\Sigma_n(K)$ is an $m$-fold cover of $\Sigma_2(K)$ where $m=\frac{n}{2}$. To see this, recall that $\pi_1(\Sigma_n(K))$ may be  viewed as the group $K^n=\ker(\pi_1(K)\to\bZ/n\bZ)/\langle\langle\mu^n\rangle\rangle$ where $\mu$ is the meridian of $K$. There is an obvious inclusion of $\ker(\pi_1(K)\to\bZ/n\bZ)\subset\ker(\pi_1(K)\to\bZ/2\bZ)$ when $2|n$, and this descends to give a homomorphism $K^n\to K^2$. In particular, there is a non-trivial  homomorphism $\pi_1(\Sigma_n(K))\to\pi_1(\Sigma_2(K))$ when $n$ is even.

Finally, since the Conway knot is prime, $\Sigma_n(K)$ is irreducible \cite{Plotnick84}.  The observation that $\pi_1(\Sigma_2(K))$ is left-orderable,  together with the homomorphism $\pi_1(\Sigma_n(K))\to\pi_1(\Sigma_2(K))$, implies that $\pi_1(\Sigma_n(K))$ is left-orderable (applying Theorem \ref{thm:loimage}) whenever $n$ is even.\end{proof}

We remark that, since the Conway knot is hyperbolic, the manifold $\Sigma_n(K)$ is hyperbolic for all sufficiently large $n$ by work of Thurston \cite{Thurston1980}.

\subsection{Hyperbolic pieces} It is natural to ask how Theorem \ref{thm:main} might generalize to toroidal 3-manifolds with hyperbolic pieces. 

\begin{theorem}\label{thm:extra}Let $Y$ be an irreducible toroidal integer homology sphere graph manifold with non-trivial JSJ decomposition, and fix a knot $K$ in some Seifert piece of $Y$. Consider $M'=Y'\smallsetminus K'$, the exterior of some knot $K'$ in an irreducible integer homology sphere $Y'$ for which $M'(\lambda')$ is prime.  Then the fundamental group of the manifold resulting from the splicing of $K$ and $K'$ is left-orderable.  \end{theorem}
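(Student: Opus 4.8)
The plan is to see this as an essentially immediate consequence of Corollary~\ref{cor:splice}: the hypotheses have been arranged precisely so that the left-orderable factor of the splice is supplied by our main theorem while the other factor is controlled by the primeness assumption. Write $N = Y\smallsetminus K$ and recall $M' = Y'\smallsetminus K'$, so that the manifold produced by splicing $K$ and $K'$ is $W = N\cup_\phi M'$, where $\phi$ carries the meridian $\mu$ of $K$ to the longitude $\lambda'$ of $K'$ and the longitude of $K$ to the meridian of $K'$. Since $Y$ and $Y'$ are both integer homology spheres the same holds for $W$, and we are exactly in the situation of Corollary~\ref{cor:splice} with $Y_1=Y$, $K_1=K$, $Y_2=Y'$, and $K_2=K'$.

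To apply the corollary I would verify its two hypotheses in turn. For the first, observe that $N(\mu)=Y$, so the meridian $\mu$ is a left-orderable slope exactly when $\pi_1(Y)$ is left-orderable. Since $Y$ has non-trivial JSJ decomposition it is toroidal and not Seifert fibred; in particular $Y$ is neither $S^3$ nor $\Sigma(2,3,5)$. As $Y$ is an irreducible integer homology sphere graph manifold, Theorem~\ref{thm:main} (equivalently Theorem~\ref{thm:general-main}) then guarantees that $\pi_1(Y)$ is left-orderable. For the second hypothesis, since $\lambda'$ bounds a surface in $M'$ the filling $M'(\lambda')$ has positive first Betti number and hence admits a surjection onto $\mathbb{Z}$; combined with the assumption that $M'(\lambda')$ is prime, Theorem~\ref{thm:loimage} shows that $\lambda'$ is a left-orderable slope. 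As $\phi(\mu)=\lambda'$, Corollary~\ref{cor:splice} (through Theorem~\ref{thm:JSJLO}) then yields that $\pi_1(W)$ is left-orderable.

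I expect the only genuine obstacle to be the topological bookkeeping implicit in invoking Theorem~\ref{thm:JSJLO}, namely checking that $N$ and $M'$ have incompressible torus boundary and that $W$ is irreducible. This is where the standing hypotheses are used: irreducibility of $Y$ together with $K$ being a non-trivial knot in a Seifert piece makes $N=Y\smallsetminus K$ irreducible with incompressible boundary, and likewise irreducibility of $Y'$ makes $M'=Y'\smallsetminus K'$ irreducible with incompressible boundary; gluing two such pieces along the incompressible splicing torus yields an irreducible $W$ by the usual innermost-disk argument. Notably the argument never requires $M'$ to be Seifert fibred, only that $M'(\lambda')$ be prime, and this is exactly what permits the Seifert pieces of $Y$ to be spliced to a possibly hyperbolic exterior $M'$, thereby extending Theorem~\ref{thm:main} beyond the graph-manifold setting.
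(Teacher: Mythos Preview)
Your proposal is correct and follows exactly the same route as the paper: both arguments observe that the meridian $\mu$ of $K$ is a left-orderable slope by Theorem~\ref{thm:main}, and then invoke Corollary~\ref{cor:splice} directly. The paper's proof is two sentences; your additional paragraphs simply make explicit the hypothesis-checking and topological bookkeeping (incompressibility of the boundary tori, irreducibility of the splice) that the paper leaves to Corollary~\ref{cor:splice} and Theorem~\ref{thm:JSJLO}.
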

\begin{proof} Let $\mu$ denote the meridian of $K$, and note that $\mu$ is a left-orderable slope by Theorem \ref{thm:main}.  Now the manifold resulting from splicing $K$ and $K'$ has left-orderable fundamental group by Corollary \ref{cor:splice}.  \end{proof}

Notice that $M'$ in Theorem \ref{thm:extra} may be hyperbolic, and that it is straightforward to iterate this construction with any number of hyperbolic pieces. This yields a large class of toroidal integer homology spheres with left-orderable fundamental groups, in the spirit of Theorem \ref{thm:main}, containing hyperbolic pieces. 

We remark that in the general there seems to be a strong interaction with coorientable taut foliations when considering the question of left-orderable fundamental groups among integer homology spheres. For example,  Roberts constructs coorientable taut foliations on certain surgeries on alternating hyperbolic knots in $S^3$ \cite[Theorem 0.1 and Theorem 0.2]{Roberts1995}, while Calegari and Dunfield  prove that an atoroidal integer homology sphere admitting a coorientable taut foliation has left-orderable fundamental group \cite[Corollary 7.6]{CD2003}. Combining these results yields a large class of hyperbolic integer homology spheres with left-orderable fundamental groups by considering $\frac{1}{n}$-surgery on any alternating hyperbolic knot in $S^3$. This provides a potentially useful tool for constructing further infinite families along the lines of Theorem \ref{thm:extra}.

\subsection{Toroidal graph manifolds in general.} While Theorem \ref{thm:JSJLO} is sufficient to deal with integer homology sphere graph manifolds, it is not sufficient to deal with all rational homology sphere graph manifolds. However, in this setting it is sometimes possible to apply Theorem 4 directly in order to left-order the fundamental group of a given graph manifold. The example that follows illustrates this fact.

Recall that the braid group $B_n$ is the group generated by $\sigma_1, \cdots \sigma_{n-1}$, subject to the relations $\sigma_i \sigma_j \sigma_i = \sigma_j \sigma_i \sigma_j$ if $|i-j|=1$, and $\sigma_i \sigma_j  = \sigma_j \sigma_i $ if $|i-j| >1$.  The braid group $B_3$ is isomorphic to the fundamental group of the complement of the trefoil; viewed as such the meridian is $\mu = \sigma_2$ and the longitude is $\lambda = \Delta^2 \sigma_2^{-6}$, where
\[ \Delta = \sigma_1 \sigma_2 \sigma_1 .
\]
Relative to this basis of the peripheral subgroup, the fibre slope of the Seifert structure is $6\mu + \lambda = \Delta^2$.

Next, we consider the twisted $I$-bundle over the Klein bottle, which has fundamental group
\[  \langle x, y | xyx^{-1} = y^{-1} \rangle,
\]
with peripheral subgroup $\langle y, x^2 \rangle$.   There are two possible Seifert structures on this manifold.  The first Seifert structure has a M\"{o}bius band as base orbifold, with no cone points, and the regular fibre in this case is represented by $y$.  The second Seifert structure has a disk as base orbifold, with two cone points of multiplicity 2.  In this case the regular fibre is given by $x^2$.

We now define our graph manifold of interest.  Let $M_1$ denote the complement of the trefoil, and $M_2$ the twisted $I$-bundle over the Klein bottle, with fundamental groups as described above.  Define
\[ M := M_1 \cup_{\phi} M_2
\]
where $\phi$ is the gluing map of their boundaries defined on the peripheral subgroups by the formula
\[ \phi( \sigma_2 ) = y^{-1}, \phi(\Delta^2) = y^{-1} x^2 .
\]
By applying the Seifert-Van Kampen theorem, we see that
\[ \pi_1(M) = \langle \sigma_1, \sigma_2, x, y | \sigma_1 \sigma_2 \sigma_1 = \sigma_2 \sigma_1 \sigma_2, xyx^{-1} = y^{-1}, \sigma_2 = y^{-1}, \Delta^2 = y^{-1}x^2 \rangle.
\]
It is not hard to check that $\pi_1(M)$ abelianizes to give $\mathbb{Z}/4\bZ$, so $M$ is not an integer homology sphere (In fact, $M$ arises from $+4$-surgery on the figure eight knot and came to the attention of the authors as a result of left-ordering manifolds arising from Dehn surgery on the figure eight knot, see \cite{BGW2010}).  Moreover, since the map $\phi$ does not send the unique Seifert fibre slope on $\partial M_1$ to either of the Seifert fibre slopes on $\partial M_2$, the manifold $M$ is not Seifert fibred, so is a toroidal graph manifold.

\begin{proposition} The fundamental group of $M$ cannot be left-ordered by applying Theorem \ref{thm:JSJLO}.
\end{proposition}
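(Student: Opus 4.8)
The plan is to verify directly that the sufficient condition of Theorem~\ref{thm:JSJLO} fails for this gluing: there is no slope $\alpha$ on $\partial M_1$ for which $\alpha$ and $\phi(\alpha)$ are simultaneously left-orderable slopes. The essential input is a complete description of the left-orderable slopes on $\partial M_2$, the boundary of the twisted $I$-bundle over the Klein bottle.

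To obtain this, I would write an arbitrary primitive slope on $\partial M_2$ as $a\,y + b\,x^2$ (with $\gcd(a,b)=1$) and record the filled group
\[
G(a,b)=\langle x,y \mid xyx^{-1}=y^{-1},\ y^{a}x^{2b}=1\rangle .
\]
Conjugating $y^{a}=x^{-2b}$ by $x$ and using the Klein bottle relation gives $y^{-a}=x\,y^{a}x^{-1}=x\,x^{-2b}x^{-1}=x^{-2b}=y^{a}$, so $y^{2a}=1$ in $G(a,b)$. A one-line abelianization computation shows $|H_1(M_2(a,b))|=4|b|$ and that the image of $y$ has order exactly $2$ in $G(a,b)^{\mathrm{ab}}$ whenever $b\neq 0$; hence $y$ is a nontrivial torsion element and $G(a,b)$ is not left-orderable for every $b\neq 0$. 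When $b=0$ the only primitive slope is $y$, whose filling is $\langle x\rangle\cong\mathbb{Z}$, which is left-orderable. Thus $y$ is the unique left-orderable slope on $\partial M_2$.

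It then remains to pull this back through $\phi$. Since $\phi(\sigma_2)=y^{-1}$ and $\phi(\Delta^2)=y^{-1}x^2$ with $\mu=\sigma_2$ and $\Delta^2=6\mu+\lambda$, the induced map on peripheral homology sends $p\mu+q\lambda$ to $(5q-p)\,y+q\,x^2$, so the $x^2$-coordinate of $\phi(\alpha)$ equals $q$. Therefore $\phi(\alpha)$ is the left-orderable slope $y$ exactly when $q=0$, that is when $\alpha=\mu$; but $\mu$ is the meridian of the trefoil, so $M_1(\mu)=S^3$ has trivial, hence non-left-orderable, fundamental group. Any other slope has $q\neq 0$, so its image has nonzero $x^2$-coordinate and is not left-orderable by the previous paragraph. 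Consequently no $\alpha$ meets the hypothesis of Theorem~\ref{thm:JSJLO}, which is exactly what must be shown.

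I expect the first step to be the crux: identifying the left-orderable slopes on the twisted $I$-bundle over the Klein bottle. What makes it manageable is that left-orderable groups are torsion-free, so rather than analyzing the Seifert geometry of each filling it suffices to produce torsion, and the relation $y^{2a}=1$ together with $y\neq 1$ (detected in $H_1$) supplies it for all $b\neq 0$. The remaining steps are just the linear bookkeeping of how $\phi$ acts on peripheral slopes, which is routine once the presentations and the identity $\Delta^2=6\mu+\lambda$ are in hand.
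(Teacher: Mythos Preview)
Your argument follows exactly the same three-step outline as the paper's proof: (i) identify $y$ as the unique left-orderable slope on $\partial M_2$; (ii) pull back through $\phi$ to see that only $\alpha=\mu$ could work; (iii) note that $M_1(\mu)=S^3$ has trivial group. The only difference is in how step (i) is justified: the paper asserts without computation that every filling other than along $y$ has either finite fundamental group (elliptic geometry) or is a free product of finite groups (the $x^2$ slope), whereas you produce torsion directly from the presentation. Your route is more self-contained, which is a genuine advantage.

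There is, however, one small gap in your torsion argument. When $a=0$ (so the slope is $x^{2}$, with $b=\pm 1$), the relation $y^{2a}=1$ is vacuous, and in fact $y$ has \emph{infinite} order in $G(0,\pm 1)\cong D_\infty$; so your sentence ``hence $y$ is a nontrivial torsion element \ldots\ for every $b\neq 0$'' does not cover this case. The fix is immediate: for $a=0$ the relation gives $x^{2}=1$, and $x$ is nontrivial in the abelianization $\mathbb{Z}/2\times\mathbb{Z}/2$, so $x$ supplies the needed torsion. Once you insert that one line, the argument is complete and the linear bookkeeping with $\phi(p\mu+q\lambda)=(5q-p)\,y+q\,x^{2}$ is correct.
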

\begin{proof} In order to apply Theorem \ref{thm:JSJLO}, we must find a left-orderable slope $s$ which is mapped to a left-orderable slope $\phi(s)$ by $\phi$.  The twisted $I$-bundle over the Klein bottle has only one left-orderable slope, corresponding to the group element $y$ in the presentation above.  In particular, filling along any other slope gives a manifold with elliptic geometry and finite fundamental group, or a fundamental group that is a free product of finite groups in the case of filling along the slope $x^2$.  Therefore, if we are to apply Theorem \ref{thm:JSJLO} to the graph manifold $M$, we must take $\phi(s) = y$ to be our left-orderable slope on $\partial M_2$.  Correspondingly, we must have $s = \phi^{-1}(y) = \sigma_2 = \mu$.  However, $\mu = \sigma_2$ is not a left-orderable slope since $B_3 / \langle \langle \sigma_2 \rangle \rangle$ is trivial.   We conclude that Theorem \ref{thm:JSJLO} cannot be applied in this case.
\end{proof}

With a little work,  we can show that $\pi_1(M)$ is left-orderable by applying Theorem \ref{thm:amalgam}.  The success of this approach relies heavily on the fact that the left-orderings of $B_3$ are very well studied.  First we need some definitions and lemmas.

Recall that a word $w$ in the generators $\sigma_1, \sigma_2$ is \textit{1-positive} if $\sigma_1$ occurs with only positive exponents and \textit{1-negative} if $\sigma_1$ has only negative exponents.  We define a left-ordering of $B_3$, called the Dubrovina-Dubrovin ordering, according to the rule:
$\beta \in B_3$ is positive if $\beta$ admits a 1-positive representative word in the generators $\sigma_1, \sigma_2$, or if $\beta=\sigma_2^k$ for some $k<0$.   Denote this left-ordering of $B_3$ by $<_{DD}$.   For background on this left-ordering of $B_3$ and the related Dehornoy ordering, see \cite{Dehornoy94, DDRW08, DD01}.  An important property for our purposes is:
\begin{lemma}[Special case of Property S  \cite{DDRW08}]
\label{lem:propertyS}
Let $\beta$ be any braid,  suppose that $k>0$ and $\beta^{-1} \sigma_2^k \beta$ is not a power of $\sigma_2$.  The braid $\beta^{-1} \sigma_2^k \beta$ is 1-positive if and only if $k>0$.
\end{lemma}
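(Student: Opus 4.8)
The plan is to reduce everything to the comparison/acyclicity properties of $B_3$ and isolate the genuinely new content. Recall from Dehornoy's theory that every braid in $B_3$ lies in exactly one of three mutually exclusive classes: the $1$-positive braids $P^{+}$, the $1$-negative braids $P^{-}$, and the $\sigma_1$-free braids $N$; and in $B_3$ a braid is $\sigma_1$-free precisely when it is a power of $\sigma_2$. Moreover, inversion interchanges $P^{+}$ and $P^{-}$ (reversing a word turns positive $\sigma_1$-exponents into negative ones) and fixes $N$ setwise. Writing $w=\beta^{-1}\sigma_2^{k}\beta$, the hypothesis says $w\notin N$, so $w\in P^{+}\cup P^{-}$. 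Since $w^{-1}=\beta^{-1}\sigma_2^{-k}\beta$, the stated equivalence (read for nonzero $k$, the sign of $k$ determining the class) follows once we prove the single implication: if $k>0$ then $w$ is not $1$-negative, hence $1$-positive; the case $k<0$ is then obtained by applying this to $w^{-1}$.

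So the core claim is that no positive power of $\sigma_2$ is conjugate to a $1$-negative braid. I would prove this by inducting on the conjugating word from the inside out. Write $\beta=s_1\cdots s_m$ in the Artin generators, so that $w=s_m^{-1}\cdots s_1^{-1}\,\sigma_2^{k}\,s_1\cdots s_m$. Any innermost letter $s_j=\sigma_2^{\pm 1}$ commutes with $\sigma_2^{k}$ and is absorbed; at the first occurrence $s_j=\sigma_1^{\pm 1}$ the computational backbone is the pair of identities derived from the braid relation $\sigma_1\sigma_2\sigma_1=\sigma_2\sigma_1\sigma_2$,
\[
\sigma_1^{-1}\sigma_2^{k}\sigma_1=\sigma_2\,\sigma_1^{k}\,\sigma_2^{-1},\qquad \sigma_1\,\sigma_2^{k}\,\sigma_1^{-1}=\sigma_2^{-1}\,\sigma_1^{k}\,\sigma_2,
\]
which show that conjugating $\sigma_2^{k}$ (with $k>0$) by $\sigma_1^{\pm1}$ produces a conjugate of $\sigma_1^{k}$ by a power of $\sigma_2$, a manifestly $1$-positive word. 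The inductive statement is that each partial conjugate $(s_1\cdots s_j)^{-1}\sigma_2^{k}(s_1\cdots s_j)$ stays inside $P^{+}\cup N$ and never enters $P^{-}$; the final term is $w$, and since $w\notin N$ by hypothesis we conclude $w\in P^{+}$.

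The main obstacle is exactly controlling conjugation across the inductive step: $1$-positivity is \emph{not} stable under conjugation by a single generator, and, crucially in $B_3$, the generators $\sigma_1$ and $\sigma_2$ are themselves conjugate (via $\Delta=\sigma_1\sigma_2\sigma_1$, one has $\Delta\sigma_1\Delta^{-1}=\sigma_2$), so a conjugate of a positive power can genuinely fall back into the $\sigma_1$-free class $N$. This is precisely why the hypothesis $w\notin N$ is needed and why a naive ``positivity is preserved'' argument cannot work. The clean way to supply the missing acyclicity/subword bookkeeping is to recognize the assertion as the verbatim specialization of Property~S of \cite{DDRW08} to $B_3$ with highest-index generator $\sigma_{n-1}=\sigma_2$, together with the elementary observation above identifying the $\sigma_1$-free braids of $B_3$ with the powers of $\sigma_2$. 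Accordingly I would present the proof as this specialization, using the two displayed identities only to make the reduction step explicit and invoking Property~S of \cite{DDRW08} to rule out the $1$-negative alternative.
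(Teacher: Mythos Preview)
The paper gives no proof of this lemma at all: it is stated as a direct citation of Property~S from \cite{DDRW08}, with no argument. Your proposal, after some motivating discussion, arrives at exactly the same place---you acknowledge that the inductive sketch cannot be completed on its own (since $1$-positivity is not stable under conjugation by a generator) and ultimately invoke Property~S of \cite{DDRW08} as the actual justification. So your approach and the paper's coincide: both are appeals to the cited reference, and your surrounding discussion (the trichotomy $P^{+}\sqcup N\sqcup P^{-}$, the conjugation identities) is correct but does not constitute an independent proof.
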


\begin{lemma}[Malyutin \cite{MN2003}, see {\cite[Lemma 3.4]{DDRW08}}]  Given $\alpha, \beta \in B_3$, the following hold for every left-ordering $<$ of $B_3$:
\begin{enumerate}
\item $\alpha < \Delta^{2k}$ and $\beta < \Delta^{2l}$ implies $\alpha \beta < \Delta^{2(k+l)}$,
\item $\alpha > \Delta^{2k}$ and $\beta > \Delta^{2l}$ implies $\alpha \beta > \Delta^{2(k+l)}$,
\item $\alpha < \Delta^{2k}$ implies $\Delta^{-2k} < \alpha^{-1}$.
\end{enumerate}
\end{lemma}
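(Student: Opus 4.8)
The plan is to reduce all three assertions to the single structural fact that the full twist $\Delta^2$ is central in $B_3$, so that $\Delta^{2k}$ lies in the center for every $k \in \mathbb{Z}$. This is standard: from the braid relation $\sigma_1\sigma_2\sigma_1 = \sigma_2\sigma_1\sigma_2$ one checks that $\Delta^2 = (\sigma_1\sigma_2)^3$ commutes with both generators, and in fact $\Delta^2$ generates the center of $B_3$. Everything else is elementary order theory.

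First I would isolate the order-theoretic principle that, although a left-ordering $<$ is by definition invariant only under left multiplication, it is automatically invariant under right multiplication by a central element. Concretely, if $z$ is central and $a < b$, then $a^{-1}b$ lies in the positive cone $P$, and since $z$ commutes with $a^{-1}b$ we have $(az)^{-1}(bz) = z^{-1}(a^{-1}b)z = a^{-1}b \in P$, whence $az < bz$. With this observation in hand, the three claims become one-line deductions valid for \emph{every} left-ordering of $B_3$.

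For the first claim I would right-multiply $\alpha < \Delta^{2k}$ by the central element $\Delta^{2l}$ to obtain $\alpha\Delta^{2l} < \Delta^{2(k+l)}$, then left-multiply $\beta < \Delta^{2l}$ by $\alpha$ to obtain $\alpha\beta < \alpha\Delta^{2l}$, and chain the two inequalities by transitivity. The second claim is the mirror image, left-multiplying $\beta > \Delta^{2l}$ by $\alpha$ and right-multiplying $\alpha > \Delta^{2k}$ by $\Delta^{2l}$, again combined by transitivity. For the third, $\alpha < \Delta^{2k}$ says $\alpha^{-1}\Delta^{2k} \in P$, and centrality gives $\Delta^{2k}\alpha^{-1} = \alpha^{-1}\Delta^{2k} \in P$, which is exactly the statement $\Delta^{-2k} < \alpha^{-1}$.

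The only genuine obstacle here is conceptual rather than computational: one must notice that the powers appearing in the inequalities are precisely the central powers $\Delta^{2k}$, so that right translation by them preserves the order despite the ambient ordering being merely left-invariant. Once centrality of $\Delta^2$ is invoked, nothing specific to $B_3$ or to the Dubrovina-Dubrovin and Dehornoy orderings is used, and the argument goes through verbatim in any left-ordered group when translation is restricted to central elements.
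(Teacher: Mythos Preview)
Your argument is correct. The key observation --- that $\Delta^2$ is central in $B_3$, so right translation by any $\Delta^{2k}$ preserves an arbitrary left-ordering --- is exactly what drives all three parts, and your deductions from it are clean and valid.

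As for comparison: the paper does not actually supply its own proof of this lemma. It is stated with attribution to Malyutin and to \cite[Lemma~3.4]{DDRW08}, and then used as a black box in the arguments that follow. So there is nothing in the paper to compare your approach against beyond noting that the cited proofs in the literature rest on precisely the same centrality fact you identified. Your write-up is self-contained and would serve perfectly well as an in-line proof had the authors chosen to include one.
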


We use these facts without explicit reference in the proofs that follow.

\begin{lemma}
\label{lem:conjugatebound}
Let $\beta \in B_3$ be any braid, and $k$ any integer.  Then
\[ \Delta^{-2} <_{DD} \beta^{-1} \sigma_2^k \beta <_{DD} \Delta^2.
\]
\end{lemma}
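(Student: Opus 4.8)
The plan is to reduce the two-sided bound to a single family of upper bounds and then handle the one genuinely hard inequality with a translation-number argument built from Malyutin's lemma. Write $\gamma = \beta^{-1}\sigma_2^k\beta$. Since $\Delta^2$ is central in $B_3$, the inverse $\gamma^{-1}=\beta^{-1}\sigma_2^{-k}\beta$ is again a conjugate of a power of $\sigma_2$, so it suffices to prove the single statement $(\star)$: that $\beta^{-1}\sigma_2^k\beta <_{DD}\Delta^2$ for every $\beta$ and every $k$. Granting $(\star)$, the upper bound is immediate, while applying $(\star)$ to $\gamma^{-1}$ and then part (3) of Malyutin's lemma (with $\Delta^2$ in the role of $\Delta^{2}$) yields $\Delta^{-2}<_{DD}\gamma$, the lower bound.

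To prove $(\star)$ I would first dispose of the easy cases using Property S (Lemma~\ref{lem:propertyS}). If $k=0$ then $\gamma=1<_{DD}\Delta^2$, since $\Delta^2$ has the $1$-positive representative $(\sigma_1\sigma_2)^3$. If $\gamma$ is a power of $\sigma_2$, then necessarily $\gamma=\sigma_2^k$ (the exponent sums in the abelianization agree), and a direct check that $\gamma^{-1}\Delta^2=\Delta^2\sigma_2^{-k}$ is $1$-positive gives $\gamma<_{DD}\Delta^2$. If $\gamma$ is not a power of $\sigma_2$ and $k<0$, then Property S applied to $\gamma^{-1}$ shows $\gamma$ is $1$-negative, so $\gamma<_{DD}1<_{DD}\Delta^2$. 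The only remaining, and decisive, case is $k>0$ with $\gamma$ $1$-positive.

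For this crux I would introduce the Dehornoy floor $\lfloor g\rfloor$, the unique integer $m$ with $\Delta^{2m}\le_{DD} g<_{DD}\Delta^{2(m+1)}$, which is well defined because $\langle\Delta^2\rangle$ is cofinal and coinitial in $<_{DD}$. Parts (1) and (2) of Malyutin's lemma say precisely that $\lfloor\cdot\rfloor$ is super- and sub-additive up to an error of $1$, so its homogenization $\tau(g)=\lim_{n}\lfloor g^n\rfloor/n$ exists and is a conjugation-invariant homogeneous quasimorphism. Now $\gamma$ is $1$-positive, hence $\gamma>_{DD}1=\Delta^{0}$ and $\lfloor\gamma\rfloor\ge 0$. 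If instead $\lfloor\gamma\rfloor\ge 1$, superadditivity would force $\lfloor\gamma^n\rfloor\ge n\lfloor\gamma\rfloor\ge n$, hence $\tau(\gamma)\ge 1$; but $\gamma$ is conjugate to $\sigma_2^k$, and for $k>0$ every power $\sigma_2^{kn}$ is a positive power of $\sigma_2$, which the $<_{DD}$ convention places in the interval $(\Delta^{-2},1)$, so $\lfloor\sigma_2^{kn}\rfloor=-1$ is bounded and $\tau(\gamma)=\tau(\sigma_2^k)=0$. This contradiction gives $\lfloor\gamma\rfloor=0$, that is $\gamma<_{DD}\Delta^2$, completing $(\star)$.

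The main obstacle is exactly this last case: controlling a $1$-positive conjugate of an arbitrarily large positive power of $\sigma_2$ from above. Property S pins down its sign but gives no a priori bound on its size, and the crude estimate that the floor changes by at most $1$ under conjugation is off by a factor of two. The point of the translation-number argument is that conjugation-invariance of $\tau$, played against superadditivity of the floor, upgrades this crude estimate to the sharp floor value $0$. I would take care to verify that $\tau$ is genuinely conjugation-invariant (the floor of $\beta^{-1}g\beta$ differs from that of $g$ by a constant depending only on $\beta$) and that $\gamma\ne\Delta^{2}$, which is automatic since a nontrivial conjugate of a power of $\sigma_2$ is never central.
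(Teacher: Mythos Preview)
Your argument is correct, but it is considerably more elaborate than the paper's. The paper argues directly by contradiction: assuming $\gamma=\beta^{-1}\sigma_2^k\beta>_{DD}\Delta^2$, one squares via Malyutin~(2) to get $\beta^{-1}\sigma_2^{2k}\beta>_{DD}\Delta^4$, then sandwiches by the cofinality bounds $\Delta^{2m}<_{DD}\beta<_{DD}\Delta^{2m+2}$ and $\Delta^{-2m-2}<_{DD}\beta^{-1}$ (from Malyutin~(3)), obtaining
\[
\sigma_2^{2k}=\beta\cdot(\beta^{-1}\sigma_2^{2k}\beta)\cdot\beta^{-1}>_{DD}\Delta^{2m}\cdot\Delta^4\cdot\Delta^{-2m-2}=\Delta^2,
\]
which contradicts the $1$-positivity of $\sigma_2^{-2k}\Delta^2$. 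The lower bound is handled symmetrically. No case analysis on the sign of $k$ or on whether $\gamma$ is a power of $\sigma_2$ is needed, and no auxiliary invariant is introduced.

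Your route packages the same Malyutin inequalities into the Dehornoy floor and its homogenization $\tau$, then exploits the conjugation invariance of $\tau$ together with the explicit computation $\tau(\sigma_2^k)=0$. This is a perfectly valid and more conceptual argument---it explains \emph{why} the bound holds, namely because conjugates of powers of $\sigma_2$ have zero translation number---and it generalizes cleanly (e.g.\ to higher $B_n$ or other Garside groups). The cost is that you must set up and verify several properties of $\lfloor\cdot\rfloor$ and $\tau$ (existence of the limit, conjugation invariance, superadditivity), and carry out a preliminary case split disposing of the $1$-negative and power-of-$\sigma_2$ cases separately. The paper's single sandwich handles all cases uniformly in three lines.
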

\begin{proof}
For contradiction, suppose that $\Delta^2 <_{DD} \beta^{-1} \sigma_2^k \beta$.   Since $\Delta$ is cofinal in every left-ordering of $B_3$ \cite{DDRW08}, we may choose $m$ so that $\Delta^{2m} <_{DD} \beta <_{DD} \Delta^{2m+2}$, hence $\Delta^{-2m-2} <_{DD} \beta <_{DD} \Delta^{-2m}$.

By squaring elements in the first inequality, we get $\Delta^4 <_{DD} \beta^{-1} \sigma_2^{2k} \beta$. Combining this with the lower bounds for $\beta$ and $\beta^{-1}$, we get:
\[ \Delta^2 = \Delta^{2m} \cdot \Delta^4 \cdot \Delta^{-2m-2} <_{DD} \beta \cdot \beta^{-1} \sigma_2^{2k} \beta \cdot \beta^{-1} = \sigma_2^{2k}.
\]
We conclude $\Delta^2 <_{DD} \sigma_2^{2k}$, or $\sigma_2^{-2k} \Delta^2 <_{DD} 1$, which is not possible since $\sigma_2^{-2k} \Delta^2$ is 1-positive.  This contradiction completes the proof.  The inequality $\Delta^{-2} <_{DD} \beta^{-1} \sigma_2^k \beta$ is proved in a similar fashion.
\end{proof}

\begin{lemma}
\label{lem:conjrestrict}
Every conjugate of the Dubrovina-Dubrovin ordering of $B_3$ restricts to one of the following two orderings on the subgroup $\langle \sigma_2, \Delta^2 \rangle$:
\begin{enumerate}
\item $\sigma_2^k \Delta^{2l}$ is positive if $l>0$, or $l=0$ and $k>0$, or
\item $\sigma_2^k \Delta^{2l}$ is positive if $l>0$, or $l=0$ and $k<0$.
\end{enumerate}
\end{lemma}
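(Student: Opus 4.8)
The plan is to make any conjugate ordering completely explicit on its positive cone and then feed everything into Lemma~\ref{lem:conjugatebound} together with the centrality of $\Delta^2$. Fix $\beta\in B_3$ and let $<$ denote the conjugate of $<_{DD}$ whose positive cone is $\beta P\beta^{-1}$, where $P$ is the positive cone of $<_{DD}$; thus a group element $g$ is positive for $<$ precisely when $\beta^{-1}g\beta>_{DD}1$. Since $\Delta^2$ is central in $B_3$, the peripheral subgroup $\langle\sigma_2,\Delta^2\rangle\cong\mathbb{Z}\oplus\mathbb{Z}$ is abelian, and the goal is to pin down the ordering that $<$ induces on it. The two orderings in the statement differ only in the sign assigned to $\sigma_2$, so the real content is to show that in every case $\Delta^2$ is positive and dominates every power of $\sigma_2$.

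First I would record two facts. Because $\Delta^2=\sigma_1\sigma_2\sigma_1\sigma_1\sigma_2\sigma_1$ is $1$-positive it lies in $P$, and because it is central we have $\beta^{-1}\Delta^2\beta=\Delta^2$; hence $\Delta^2$ is positive for $<$, for every $\beta$. Next I would apply Lemma~\ref{lem:conjugatebound} to the braid $\beta$, using both the exponent $k$ and the exponent $-k$, to obtain $\Delta^{-2}<_{DD}\beta^{-1}\sigma_2^{\pm k}\beta<_{DD}\Delta^2$. Unwinding this through the cone description above (and moving the central $\Delta^{\pm2}$ across $\beta$) translates it into the single statement $\Delta^{-2}<\sigma_2^k<\Delta^2$ for all $k\in\mathbb{Z}$; that is, in the restricted ordering $\Delta^2$ dominates every power of $\sigma_2$.

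With these two facts the restriction is forced to be the claimed lexicographic ordering. For $l>0$, using that $\sigma_2$ and $\Delta^2$ commute, I would write $\sigma_2^k\Delta^{2l}>1\iff\Delta^{2l}>\sigma_2^{-k}$ and then note $\sigma_2^{-k}<\Delta^2\le\Delta^{2l}$ by dominance together with $\Delta^2>1$; hence every such element is positive. The case $l<0$ is the mirror image and yields negativity. This already agrees with both candidate orderings on the region $l\ne0$, so the only remaining freedom is the sign of $\sigma_2$: the restriction of $<$ to the infinite cyclic group $\langle\sigma_2\rangle$ is one of the two orderings of $\mathbb{Z}$, so for $l=0$ the element $\sigma_2^k$ is positive either exactly when $k>0$ or exactly when $k<0$. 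These two alternatives are precisely orderings (1) and (2).

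The step I expect to require the most care is the bookkeeping in the second paragraph: converting the inequalities of Lemma~\ref{lem:conjugatebound}, which are statements about $<_{DD}$, into statements about the conjugate ordering $<$ via the cone $\beta P\beta^{-1}$, and in particular keeping track that the exponent $-k$ (rather than $k$) must be used so that the bound lands on the correct side of $\Delta^2$. Everything else reduces to two standard observations, namely that a left-ordering of $\mathbb{Z}\oplus\mathbb{Z}$ in which one basis element dominates the other is lexicographic, and that $\langle\sigma_2\rangle\cong\mathbb{Z}$ admits exactly two orderings.
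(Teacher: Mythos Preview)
Your argument is correct. Both your proof and the paper's hinge on the same two ingredients: Lemma~\ref{lem:conjugatebound} and the centrality of $\Delta^2$. You translate the bound $\Delta^{-2}<_{DD}\beta^{-1}\sigma_2^k\beta<_{DD}\Delta^2$ into the statement that $\Delta^2$ dominates every power of $\sigma_2$ in the conjugate ordering, and then read off the lexicographic structure on $\langle\sigma_2,\Delta^2\rangle\cong\bZ\oplus\bZ$; the paper instead rearranges $1<\sigma_2^k\Delta^{2l}$ directly to $\beta^{-1}\sigma_2^{-k}\beta<_{DD}\Delta^{2l}$ and applies Lemma~\ref{lem:conjugatebound} there. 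These are reorganizations of the same computation.

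The one genuine difference is in the $l=0$ case. You dispose of it by the abstract observation that $\langle\sigma_2\rangle\cong\bZ$ has exactly two left-orderings, which is all the lemma as stated requires. The paper instead invokes Property~S (Lemma~\ref{lem:propertyS}) to determine \emph{which} of the two orderings arises, according to whether $\beta$ commutes with $\sigma_2$. Your route is more economical for the lemma itself; the paper's route extracts a bit more, namely the explicit dichotomy (commuting $\beta$ gives ordering~(2), non-commuting $\beta$ gives ordering~(1)), and this extra information is exactly what is used in the proof of the subsequent Proposition when matching conjugates of $<_{DD}$ with the orderings $<_1,<_2$ on $\pi_1(M_2)$. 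So if you intend to carry your argument forward to that application, you will eventually need Property~S or an equivalent.
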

\begin{proof}
Let $\beta \in B_3$ be any braid.  If $\sigma_2^k \Delta^{2l}$ is positive in the conjugate ordering $<_{DD}^{\beta}$, then by definition we have
\[ \beta <_{DD} \sigma_2^k \Delta^{2l} \beta,
\]
which we can rearrange to give $\beta^{-1} \sigma_2^{-k} \beta <_{DD} \Delta^{2l}$.  By Lemma \ref{lem:conjugatebound}, either $l>0$, or $l=0$ and we have $\beta^{-1} \sigma_2^{-k} \beta <_{DD} 1$.

We now consider two cases.   First, if $\beta$ commutes with $\sigma_2$ (and hence all its powers \cite{FRZ96}), then $\beta^{-1} \sigma_2^{-k} \beta <_{DD} 1$ becomes $\sigma_2^{-k}<_{DD} 1$, which happens if and only if $k<0$.

Second, if $\beta$ does not commute with $\sigma_2$, then $\beta^{-1} \sigma_2^{-k} \beta$ is not a power of $\sigma_2$.  By definition of the DD-ordering, $\beta^{-1} \sigma_2^{-k} \beta <_{DD} 1$ implies that $\beta^{-1} \sigma_2^{-k} \beta$ is 1-negative.  By Property S, this happens if and only if $k>0$.
\end{proof}

\begin{proposition} The fundamental group
\[ \pi_1(M) = \langle \sigma_1, \sigma_2, x, y | \sigma_1 \sigma_2 \sigma_1 = \sigma_2 \sigma_1 \sigma_2, xyx^{-1} = y^{-1}, \sigma_2 = y^{-1}, \Delta^2 = y^{-1}x^2 \rangle
\]
is left-orderable.
\end{proposition}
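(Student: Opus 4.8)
The plan is to recognize $\pi_1(M)$ as the amalgamated free product $B_3 *_\phi K$, where $K=\langle x,y\mid xyx^{-1}=y^{-1}\rangle$ is the Klein bottle group and the amalgamating subgroups are the peripheral rank-two free abelian groups $H_1=\langle\sigma_2,\Delta^2\rangle\subset B_3$ and $H_2=\langle y,x^2\rangle\subset K$, identified by $\phi$. I would then verify the hypotheses of the Bludov--Glass criterion (Theorem \ref{thm:amalgam}): produce normal families $L_1\subset\LO(B_3)$ and $L_2\subset\LO(K)$ for which $\phi$ is compatible. Because the conjugates of the Dubrovina--Dubrovin ordering are already under tight control (Lemma \ref{lem:conjrestrict}), the whole problem collapses to transporting two model orderings through $\phi$.

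Concretely, I would take $L_1$ to be the conjugation orbit of $<_{DD}$, which is normal by construction; by Lemma \ref{lem:conjrestrict} every $P_1\in L_1$ restricts to $H_1$ as one of the two orderings (1) or (2) recorded there. On the $K$ side I would use the two left-orderings in which $\langle y\rangle$ is convex and $\bar x>1$ in $K/\langle y\rangle\cong\bZ$, with positive cones $\{x^ny^m : n>0\}\cup\{y^m : m>0\}$ and $\{x^ny^m : n>0\}\cup\{y^m : m<0\}$. Using $y^m x^n = x^n y^{(-1)^n m}$ one checks these are genuine left-orderings, and since conjugation by $x$ inverts $y$ while fixing the quotient order and every conjugation preserves the sign of $\bar x$, these two orderings constitute a single conjugation orbit; this orbit is the normal family $L_2$.

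The computational heart is the identity
\[ \phi(\sigma_2^k\Delta^{2l}) = (y^{-1})^k(y^{-1}x^2)^l = x^{2l}y^{-(k+l)}, \]
valid because $H_1$ and $H_2$ are abelian. Substituting this into the two cones of $L_2$, I would check that restriction (1) matches the ordering with $y<1$ and restriction (2) matches the ordering with $y>1$; in fact in each case the positive part of $H_1$ maps \emph{exactly} onto the positive part of $H_2$, so $\phi$ is compatible for $(L_1,L_2)$ in the required sense for every $P_1\in L_1$. With both families normal and $\phi$ compatible, Theorem \ref{thm:amalgam} gives that $B_3 *_\phi K\cong\pi_1(M)$ is left-orderable.

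I do not expect a serious obstacle: Lemma \ref{lem:conjrestrict} has already done the hard work of reducing uncountably many conjugate orderings to just two restrictions on $H_1$. The only step requiring genuine care is the sign bookkeeping, since $\phi(\Delta^2)$ mixes the coordinates (it carries a factor $y^{-1}$ in addition to $x^2$), so one must confirm both that the matching of the two restrictions to the two orderings of $K$ is internally consistent and that the family $L_2$ is actually closed under conjugation, rather than merely containing the two orderings that get used.
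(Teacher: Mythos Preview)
Your proposal is correct and follows essentially the same route as the paper. The paper's $L_1$ is likewise the conjugation orbit of $<_{DD}$, its $L_2$ is the same pair of lexicographic orderings on $K$ coming from the short exact sequence $1\to\langle y\rangle\to K\to\langle x\rangle\to 1$, and the compatibility check is organized by whether the conjugating braid commutes with $\sigma_2$ (equivalently, by which of the two restrictions in Lemma~\ref{lem:conjrestrict} occurs); your closed-form computation $\phi(\sigma_2^k\Delta^{2l})=x^{2l}y^{-(k+l)}$ is a mild streamlining of the same verification.
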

\begin{proof}
Recall that $M$ is the union of two Seifert fibred pieces $M_1$ and $M_2$ with $\pi_1(M_1) = B_3$ and $\pi_1(M_2) = \langle x, y | xyx^{-1} = y^{-1} \rangle$, with the gluing map $\phi \langle \sigma_2 \Delta^2 \rangle \rightarrow \langle  y, x^2 \rangle$ given by the formula
\[ \phi( \sigma_2 ) = y^{-1}, \phi(\Delta^2) = y^{-1} x^2 .
\]

In order to use Theorem \ref{thm:JSJLO} and show that $\pi_1(M)$ is left-orderable, we must define normal families $L_1 \subset \LO(\pi_1(M_1))$ and $L_2 \subset\LO(\pi_1(M_2))$ that are compatible with the map $\phi$.

The first normal family, $L_1$, will be all conjugates of the Dubrovina-Dubrovin ordering of $B_3$, as defined above.  For the normal family $L_2$, consider the short exact sequence
\[ 1 \rightarrow \langle \langle y \rangle \rangle \rightarrow \pi_1(M_2) \stackrel{q}{\rightarrow} \langle x \rangle \rightarrow 1.
\]
We can use this short exact sequence to define two left-orderings $<_1$ and $<_2$ of $\pi_1(M_2)$ that are conjugate to each other by the action of $x$.  Given $g \in \pi_1(M_2)$, declare $1 <_1 g$ if $q(g) = x^l$, $l>0$, or if $q(g) = 1$ and $g=y^k$ with $k>0$.  Similarly, define $1 <_2 g$ if $q(g) = x^l$, $l>0$, or if $q(g) = 1$ and $g=y^k$ with $k<0$, so that the sign of $y$ is opposite in $<_2$.  It is not hard to check that  $L_2 = \{ <_1, <_2 \}$ is a normal family of $ \LO(\pi_1(M_2))$.

Now we check that the normal families $L_1$ and $L_2$ are compatible with the map $\phi$.  To this end, choose a left-ordering $<$ from the family $L_1$.  There are two cases to consider.

\noindent \textbf{Case 1.}  The ordering $<$ is conjugate to $<_{DD}$ by a braid $\beta$ that  does not commute with $\sigma_2$.    In this case, we use the ordering $<_2$ in $L_2$ to demonstrate compatibility.  Choose $h = \sigma_2^k \Delta^{2l} \in \langle \sigma_1 \Delta^2 \rangle$ satisfying $1<h$, then by Lemma \ref{lem:conjrestrict} either $l>0$ or $l=0$ and $k>0$.  Then
\[ \phi(h) = y^{-k} (y^{-1}x^2)^l,
\]
and we wish to check that $1 <_2 y^{-k} (y^{-1}x^2)^l$.  Observe that $q(y^{-k} (y^{-1}x^2)^l) = x^{2l}$, so $1 <_2 y^{-k} (y^{-1}x^2)^l$ if $l>0$.  If $l=0$, then  $ y^{-k} (y^{-1}x^2)^l = y^{-k}$, and $1 <_2 y^{-k}$ because $k>0$.

\noindent \textbf{Case 2.}   The ordering $<$ is conjugate to $<_{DD}$ by a braid $\beta$ that  commutes with $\sigma_2$.  In this case we use $<_1$ in $L_1$ to demonstrate compatibility with $\phi$.  The calculation is nearly identical to Case 1 (with the signs of $k$ reversed), and is left to the reader.
\end{proof}

\bibliographystyle{plain}
\bibliography{JSJLO}
\end{document}